\documentclass[12pt]{amsart}
\usepackage{amsfonts}
\usepackage{amsmath}
\usepackage[mathscr]{eucal}
\usepackage{eurosym}
\usepackage{amssymb}
\usepackage{amstext}
\usepackage{amsthm}
\usepackage{mathrsfs}
\usepackage{centernot}
\usepackage{enumitem}
\usepackage{graphicx, color}
\usepackage{caption}
\usepackage{subcaption}
\usepackage[a4paper, total={6.5in, 8in}]{geometry}
\usepackage{cool}
\usepackage{enumitem}

\setcounter{MaxMatrixCols}{10}

\AtBeginDocument{          }
\numberwithin{equation}{section}
\newtheorem{theorem}{Theorem}[section]

\newtheorem{corollary}[theorem]{Corollary}

\newtheorem{lemma}[theorem]{Lemma}

\newtheorem{proposition}[theorem]{Proposition}
\newtheorem{remark}[theorem]{Remark}

\begin{document}
\title[Discrete Bessel Functions]{Discrete Bessel functions and
discrete wave equation}
\author[A. Ba\v{s}i\'{c}]{Amar Ba\v{s}i\'{c}}
\address{Faculty of Electrical Engineering\\
University of Sarajevo\\
Zmaja od Bosne bb\\
71 000 Sarajevo\\
Bosnia and Herzegovina}
\email{abasic@etf.unsa.ba}
\author[L. Smajlovi\'{c}]{Lejla Smajlovi\'{c}}
\address{School of Economics and Business\\
University of Sarajevo\\
Trg Oslobodjenja Alija Izetbegovi\'{c} 1\\
71 000 Sarajevo\\
Bosnia and Herzegovina}
\email{lejla.smajlovic@efsa.unsa.ba}
\author[Z. \v{S}abanac]{Zenan \v{S}abanac}
\address{Department of Mathematics and Computer Science\\
University of Sarajevo\\
Zmaja od Bosne 35\\
71 000 Sarajevo\\
Bosnia and Herzegovina}
\email{zsabanac@pmf.unsa.ba}

\begin{abstract}
In this paper we study discrete Bessel functions which are solutions to the discretization of Bessel differential equations when the forward and the backward difference replace the time derivative. We focus on the discrete Bessel equations with the backward difference and derive their solutions.
We then study transformation properties of those functions, describe their asymptotic behaviour and compute Laplace transform.
As an application, we study the discrete wave equation on the integers in timescale $T=\mathbb{Z}$ and express its fundamental and general solution in terms of the discrete $J$-Bessel function. Going further, we show that the first fundamental solution of this equation oscillates with the exponentially decaying amplitude as time tends to infinity.
\end{abstract}

\subjclass{39A12, 39A14, 39A22}
\keywords{difference equation, discrete Bessel functions, asymptotic behaviour, discrete wave equation}
\maketitle

\vspace{1.5cc}
\section{Introduction and statement of results}

The classical $J$-Bessel and $I$-Bessel functions of the first kind are
important mathematical objects arising in different fields of mathematics and its applications.
They are defined for $z\in\mathbb{C}$ with $|\arg z|< \pi$, and a complex
index $\nu$ by the absolutely convergent series
\begin{equation}  \label{eq. J-Bessel def}
\mathcal{J}_{\nu}(z)= \left(\frac{z}{2}\right)^{\nu}\sum_{k=0}^{\infty}\frac{%
(-1)^k }{k! \Gamma(\nu+k+1)}\left(\frac{z}{2}\right)^{2k},
\end{equation}
and
\begin{equation}  \label{eq. I-Bessel def}
\mathcal{I}_{\nu}(z)= \left(\frac{z}{2}\right)^{\nu}\sum_{k=0}^{\infty}\frac{%
1 }{k! \Gamma(\nu+k+1)}\left(\frac{z}{2}\right)^{2k}.
\end{equation}

For a non-negative integer $\nu$, the $J$-Bessel and $I$-Bessel functions are
well defined by the series \eqref{eq. J-Bessel def} and \eqref{eq. I-Bessel
def} for all complex arguments $z$. When the index is a negative integer $-n$, then $\mathcal{J}_{-n}(z) = (-1)^n \mathcal{J}_n(z)$ and $\mathcal{I}_{-n}(z) = \mathcal{I}_n(z)$, for all complex arguments $z$.

For the purposes of this paper, it is important to notice that functions $\mathcal{J}%
_{\nu}(z) $ and $\mathcal{I}_{\nu}(z)$ are solutions to the Bessel differential equations
\begin{equation}  \label{eq. Bessel dif eq.}
z^2 \frac{d^2f}{dz^2}+ z \frac{df}{dz} - (\nu^2 - x^2)f=0 \quad \text{and}
\quad z^2 \frac{d^2f}{dz^2}+ z \frac{df}{dz} - (\nu^2 + x^2)f=0,
\end{equation}
respectively.

Moreover, the classical Bessel functions arise in solutions of diffusion and wave equations in different settings in which the solutions depend only on the distance between the spatial variables (the so-called radial dependence, see \cite{BM01}). For example, the solution to the wave equation on homogeneous trees, derived in \cite{CP94}, is expressed in terms of the $J$-Bessel function. In contrast, the solution to the diffusion equation on any $q$-regular graph deduced in \cite{CJK14}, is expressed in terms of the $I$-Bessel functions. (See also \cite{Slav22} for a more general diffusion equation and \cite{Slav17} for the wave equation).



\subsection{Discretizations of Bessel functions}

There exist many analogues and generalizations of Bessel functions. For
example, the $q-$Bessel functions are well studied (for an excellent introduction, see the thesis \cite{Sw92}), and their further generalizations to discrete timescales are also developed and applied in \cite{Ma13}, \cite{RKN19} or \cite{YYT22}, to name a few.

The starting point of this paper is discretizations of differential equations \eqref{eq. Bessel dif eq.} in which the forward or backward difference operator replaces the classical derivative. More precisely, we will study discretizations of Bessel functions that satisfy discrete analogues of the Bessel differential equations
\eqref{eq.
Bessel dif eq.} when the timescale equals $\mathbb{Z}$ and when the
derivative is either the forward or the backward difference. Those functions
will also arise as solutions of the diffusion/wave equation when the
timescale is $\mathbb{Z}$, the time derivative is the forward or the backward
difference and the spatial variable belongs to $\mathbb{Z}$.

The discretizations of equations \eqref{eq. Bessel dif eq.} are given for any integer $n$ and any nonzero
complex parameter $c$ are as follows:
\begin{itemize}
\item The forward difference equation
\begin{equation}
t\left( t-1\right) \partial _{t}^{2}y\left( t-2\right) +t\partial
_{t}y\left( t-1\right) \pm c^{2}t\left( t-1\right) y\left( t-2\right)
-n^{2}y\left( t\right) =0,  \label{modBesselEq}
\end{equation}
\item The backward difference equation
\begin{equation}
t\left( t+1\right) \overline{\partial }_{t}^{2}y_{n}\left( t+2\right) +t%
\overline{\partial }_{t}y_{n}\left( t+1\right) \pm c^{2}t\left( t+1\right)
y_{n}\left( t+2\right) -n^{2}y_{n}\left( t\right) =0.  \label{backBesselEq1}
\end{equation}
\end{itemize}

\noindent
Here $\partial_t$ denotes the \emph{forward difference operator} which acts on
functions $g$ defined on $\mathbb{Z}$ as
$$\partial_t g(t)=g(t+1)-g(t), \quad t\in\mathbb{Z}$$ while $
\overline{\partial}_t$ is the \emph{backward difference operator} acting as
$$
\overline{\partial}_t g(t) = g(t) - g(t-1), \quad t\in\mathbb{Z}.$$
Note that difference equations \eqref{modBesselEq} and \eqref{backBesselEq1} are different from equations in \cite{Boy61} where the discretized Laplace
equation in spherical coordinates was studied; see also \cite[example on p. 187]
{LL61} for a different type of discretization of equation \eqref{eq. Bessel dif eq.}.

The forward difference equation \eqref{modBesselEq} was first studied in
\cite{BC} with the plus sign in front of $c^2$, and for $c=1$. The detailed
study of this equation for a general $c\in\mathbb{C}\setminus\{0\}$ was carried out in \cite%
{Slav18}, where it was proved that the discrete $J$-Bessel function
\begin{equation}
J_{n}^{c}\left( t\right) =\frac{\left( -c/2\right) ^{n}\left( -t\right) _{n}%
}{n!}\Hypergeometric{2}{1}{\frac{n-t}{2},\frac{n-t}{2}+%
\frac{1}{2}}{n+1}{-c^2},\ t\in \mathbb{N}_0,\ n\in \mathbb{N}_{0},
\label{cJBessel}
\end{equation}%
and the discrete $I$-Bessel function
\begin{equation}
I_{n}^{c}\left( t\right) =\frac{\left( -c/2\right) ^{n}\left( -t\right) _{n}%
}{n!}\Hypergeometric{2}{1}{\frac{n-t}{2},\frac{n-t}{2}+\frac{1}{2}}{n+1}{c^2}%
,\ t\in \mathbb{N}_0,\ n\in \mathbb{N}_{0},  \label{cmodJBessel}
\end{equation}
are solutions to the forward difference equation \eqref{modBesselEq}, with
the plus and minus sign, respectively. Here, $_{2}F_{1}$ is the Gauss
hypergeometric function and $\left( t\right) _{k}$ is the Pochhammer symbol,
see equation \eqref{eq. hypergeom defn} below. Given that the Gauss
hypergeometric function $_{2}F_{1}(\alpha,\beta;\gamma;z )$ can be analytically continued into the complex $z$-plane cut along $[1,\infty]$, the function  $J_{n}^{c}\left( t\right)$ is well-defined for  $t\in\mathbb{Z}_{<0}$ and $c\in \mathbb{C}\setminus\{i\alpha:\, \alpha\in \mathbb{R},\, |\alpha|\geq 1\}$. Analogously, the function $I_{n}^{c}\left( t\right)$ is well defined for $t\in\mathbb{Z}_{<0}$ and $c\in \mathbb{C}\setminus\{\alpha:\, \alpha\in \mathbb{R},\, |\alpha|\geq 1\}$.

The first main result of this paper is the following theorem which describes two solutions to the backward difference equation \eqref{backBesselEq1}, thus providing definitions for two new discretizations of $J$-Bessel and $I$-Bessel functions, when the timescale is $\mathbb{Z}$, and the delta derivative is the backward difference.
\begin{theorem}
\label{thm: difference eq.} Let $c\in \mathbb{C}\setminus\{i\alpha:\, \alpha\in \mathbb{R},\, |\alpha|\geq 1\}$. Then, the function
\begin{equation}
\overline{J}_{n}^{c}\left( t\right) =\frac{\left( c/2\right) ^{n}\left(
t\right) _{n}}{n!}\Hypergeometric{2}{1}{\frac{n+t}{2},\frac{n+t}{2}+%
\frac{1}{2}}{n+1}{-c^2},\ t\in \mathbb{Z},\ n\in \mathbb{N}_{0},
\label{newcJBessel}
\end{equation}
is the solution to the backward difference equation \eqref{backBesselEq1},
with the plus sign.
\vskip .06in
\noindent
For $c\in \mathbb{C}\setminus\{\alpha:\, \alpha\in \mathbb{R},\, |\alpha|\geq 1\}$, the function
\begin{equation}
\overline{I}_{n}^{c}\left( t\right) =\frac{\left( c/2\right) ^{n}\left(
t\right) _{n}}{n!}\Hypergeometric{2}{1}{\frac{n+t}{2},\frac{n+t}{2}+%
\frac{1}{2}}{n+1}{c^2},\ t\in \mathbb{Z},\ n\in \mathbb{N}_{0},
\label{newcIBessel}
\end{equation}
is the solution to the backward difference equation \eqref{backBesselEq1},
with the minus sign.
\end{theorem}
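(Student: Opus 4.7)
My plan is to derive Theorem~\ref{thm: difference eq.} from the already-established solvability of the forward difference equation \eqref{modBesselEq} by exploiting the reflection symmetry $t\mapsto -t$, rather than verifying the hypergeometric identities by direct substitution.

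\textbf{Step 1 (operator symmetry).} Set $\tilde y_n(t):=y_n(-t)$. Working directly from the definitions of $\partial_s$ and $\overline{\partial}_t$, I will establish the two identities
\begin{equation*}
(\partial_s y_n)(-t-1) = y_n(-t)-y_n(-t-1) = -\overline{\partial}_t\tilde y_n(t+1),
\end{equation*}
\begin{equation*}
(\partial_s^{\,2} y_n)(-t-2) = y_n(-t)-2y_n(-t-1)+y_n(-t-2) = \overline{\partial}_t^{\,2}\tilde y_n(t+2).
\end{equation*}
The single sign flip in the first-order term will be crucial: it will be absorbed by the coefficient $-t$ produced when one substitutes $s=-t$ in the leading coefficient $s$ of \eqref{modBesselEq}.

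\textbf{Step 2 (equation transforms).} Substituting $s=-t$ into \eqref{modBesselEq} yields
\begin{equation*}
t(t+1)\,\partial_s^{\,2}y_n(-t-2) - t\,\partial_s y_n(-t-1) \pm c^{2}t(t+1)\,y_n(-t-2) - n^{2}y_n(-t)=0,
\end{equation*}
after which Step~1 rewrites this as exactly \eqref{backBesselEq1} for $\tilde y_n$. Thus the reflection $y_n(s)\mapsto y_n(-s)$ maps solutions of the forward equation to solutions of the backward equation, with the $\pm$ sign preserved.

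\textbf{Step 3 (identify $\overline J_n^c$ and $\overline I_n^c$).} By \cite{Slav18}, $J_n^c$ and $I_n^c$ solve \eqref{modBesselEq} with the plus and minus sign respectively. Comparing the defining series \eqref{cJBessel}--\eqref{cmodJBessel} with \eqref{newcJBessel}--\eqref{newcIBessel}, and using $(-(-t))_n=(t)_n$, one reads off
\begin{equation*}
\overline J_n^c(t) = (-1)^n J_n^c(-t),\qquad \overline I_n^c(t) = (-1)^n I_n^c(-t),
\end{equation*}
since the ${}_2F_1$ factors match term by term and the sign mismatch $(c/2)^n$ vs.\ $(-c/2)^n$ is exactly $(-1)^n$. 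Being a constant multiple of a solution, each new function solves the backward equation.

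\textbf{Step 4 (domain).} The result of \cite{Slav18} is a priori stated for $t\in\mathbb{N}_0$. To conclude for all $t\in\mathbb{Z}$, I will view the left-hand side of \eqref{backBesselEq1} with $\overline J_n^c$ or $\overline I_n^c$ plugged in as a meromorphic function of $t$ built from ${}_2F_1$ factors evaluated at $t$, $t+1$, $t+2$; the hypothesis on $c$ (which avoids the respective branch cut of ${}_2F_1$) is precisely what guarantees that all three factors are simultaneously holomorphic in $t$. Vanishing on the infinite set $\mathbb{N}_0$ then forces vanishing on all admissible $t\in\mathbb{Z}$.

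The conceptual content (Steps~1--3) is essentially free once one spots the reflection. The main bookkeeping hurdle is Step~4: one must be careful that the shifts $t+1,t+2$ inside the hypergeometric factors and the reflection $t\mapsto -t$ are jointly consistent with the branch-cut restriction on $c$, so that extending from $\mathbb{N}_0$ to $\mathbb{Z}$ by analytic continuation is actually legitimate.
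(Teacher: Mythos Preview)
Your reflection approach (Steps 1--3) is correct and takes a genuinely different route from the paper's proof. The paper expands \eqref{backBesselEq1} into the three-term recurrence
\[
t(t+1)(1+c^{2})y_n(t+2)-t(2t+1)y_n(t+1)-(n^{2}-t^{2})y_n(t)=0
\]
and verifies it directly from the contiguous relation Lemma~\ref{GHFrecurs}(iv) with $\alpha=\tfrac{n+t}{2}+1$, $\beta=\tfrac{n+t}{2}+\tfrac12$, $\gamma=n+1$, $z=-c^{2}$; the $\overline I$ case then follows from $\overline I_n^{c}=(-i)^{n}\overline J_n^{ic}$. Your route is more conceptual --- the identity $\overline J_n^{c}(t)=(-1)^{n}J_n^{c}(-t)$ you obtain in Step~3 is in fact recorded and used throughout Section~\ref{sec. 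Properties} --- and it explains structurally why the backward functions arise from the forward ones by reflection. The cost is that you import \cite{Slav18} as a black box, whereas the paper's one-line verification is self-contained once the contiguous relations are in hand.

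There is, however, a real gap in Step~4. First a bookkeeping slip: under $s=-t$ the forward equation at $s\in\mathbb{N}_{0}$ becomes the backward equation at $t\in\mathbb{Z}_{\leq 0}$, not at $t\in\mathbb{N}_{0}$. More seriously, the assertion that a holomorphic function vanishing on an infinite set must vanish identically is false when that set has no accumulation point in $\mathbb{C}$: witness $\sin\pi t$, which is entire and vanishes on all of $\mathbb{Z}$ yet is not zero. So the identity theorem in $t$ does not give you the extension. The cleanest repair is to note that \cite{Slav18} proves the forward equation via a contiguous relation for ${}_{2}F_{1}$, which is an identity in the parameters and therefore already holds for every $t\in\mathbb{Z}$ (indeed for complex $t$) once $c$ avoids the branch cut; with that observation Step~4 becomes unnecessary. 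Alternatively one could invoke Carlson's theorem after establishing exponential-type bounds in $t$, but that is far more work than the problem warrants.
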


We will call functions $J_n^c$ and $I_n^c$  \emph{%
forward} discrete Bessel functions, while functions $\overline{J}_{n}^{c}$
and $\overline{I}_{n}^{c}$ will be called \emph{backward} discrete Bessel
functions.

The complex parameter $c$ in the above theorem is chosen so that the hypergeometric function $\Hypergeometric{2}{1}{\alpha,\beta}{\gamma}{z}$ appearing in \eqref{newcJBessel} and \eqref{newcIBessel} is the principal branch of the analytic continuation of this function from the disc $|z|<1$ to the complex $z$-plane cut from 1 to $\infty$ along the real axes. Therefore, for any integer $t\in\mathbb{Z}$ functions $\overline{J}_{n}^{c}\left( t\right)$ and $\overline{I}_{n}^{c}\left( t\right)$ are holomorphic functions of $c$ in a given range. When $t\in\mathbb{Z}_{<0}$, the hypergeometric series in \eqref{newcJBessel} and \eqref{newcIBessel} is a polynomial in $c$, and hence holomorphic on the entire complex $c$-plane, see Proposition \ref{propJ} below.
\medskip

The forward discrete $J$-Bessel and $I$-Bessel functions have been studied in \cite{Slav18}, where many properties of those functions have been derived, including various transformation laws and analysis of sign changes for nonzero real $c$ and positive integers $t$.
An expression as a polynomial in $c$, a precise asymptotic behaviour
\begin{equation}\label{eq. I asympt as t to infty}
I_{n}^{c}\left( t\right) \sim \left( \mathrm{sgn}(c)\right) ^{n}%
\frac{\left( 1+\left\vert c\right\vert \right) ^{t+\frac{1}{2}}}{\sqrt{2\pi
t\left\vert c\right\vert }}\text{, as }t\rightarrow \infty,
\end{equation}
of $I_n^c(t)$, for $c\in\mathbb{R}\setminus\{0\}$ and a generating function for $I_n^c(t)$ was deduced in \cite[Section 3]{CHJSV}. A generating function for the backward discrete $I$-Bessel function $\overline{I}_{n}^{c}$ was derived in \cite{KS22} where some further properties of this function were established.

In this paper, we complete studies of analytic properties of the forward discrete $J$-Bessel function $J_n^c$ and the backward discrete $I$-Bessel function $\overline{I}_{n}^{c}$ and present a detailed study of properties of the backward discrete $J$-Bessel function $\overline{J}_{n}^{c}$, which is introduced in this paper.

We prove that $\overline{J}_{n}^{c}$ satisfies the properties analogous to the properties of the Bessel function $\mathcal{J}_n(t)$, see Lemma \ref{lemma2} below. Then, we proceed with the study of the
asymptotic behaviour of discrete Bessel functions as $t\to \infty$ which is summarized in the following theorem.

\begin{theorem}
\label{th_asymp} For any real, nonzero parameter $c$ and a fixed $n\in
\mathbb{N} $, we have that%
\begin{equation}
J_{n}^{c}\left( t\right) \sim \left( \mathrm{sgn}(c)\right) ^{n}\frac{\sqrt{2%
}}{\sqrt{\pi t\left\vert c\right\vert }}\left( 1+c^{2}\right) ^{\frac{t}{2}+%
\frac{1}{4}}\cos \left( \left( t+\frac{1}{2}\right) \theta -\frac{\pi }{4}+%
\frac{n\pi }{2}\right) \text{, as }t\rightarrow \infty ,  \label{asymp1}
\end{equation}

\begin{equation}
\overline{J}_{n}^{c}\left( t\right) \sim \left( \mathrm{sgn}(c)\right) ^{n}%
\frac{\sqrt{2}}{\sqrt{\pi t\left\vert c\right\vert }}\left( 1+c^{2}\right)
^{-\frac{t}{2}+\frac{1}{4}}\cos \left( \left( t-\frac{1}{2}\right) \theta -%
\frac{\pi }{4}+\frac{n\pi }{2}\right) \text{, as }t\rightarrow \infty ,
\label{asymp2}
\end{equation}
where $\mathrm{sgn}(c)$ denotes the sign of $c$ and $\theta\in\left(0,\frac{\pi}{2}\right)$ is such that $\cos \theta =\left( 1+c^{2}\right) ^{-\frac{1}{2}}$.

For any real, nonzero parameter $c$ with $|c|<1$ we have
\begin{equation}
\overline{I}_{n}^{c}\left( t\right) \sim \left( \mathrm{sgn}(c)\right) ^{n}%
\frac{\left( 1-\left\vert c\right\vert \right) ^{-t+\frac{1}{2}}}{\sqrt{2\pi
t\left\vert c\right\vert }}\text{, as }t\rightarrow \infty ,  \label{asymp3}
\end{equation}
\end{theorem}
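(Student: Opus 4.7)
My plan is to derive all three asymptotics by a steepest-descent analysis on contour-integral representations, exploiting the symmetry identities
\[
J_n^c(t)=(-i)^n\, I_n^{ic}(t),\qquad \overline{J}_n^c(t)=(-i)^n\,\overline{I}_n^{ic}(t),
\]
which follow immediately by substituting $c\mapsto ic$ in \eqref{cJBessel}, \eqref{cmodJBessel}, \eqref{newcJBessel}, \eqref{newcIBessel}: the substitution sends $c^2\mapsto-c^2$ in the hypergeometric argument and pulls a factor $i^n$ out of $(c/2)^n$. Thus the three asymptotics reduce to saddle-point problems for $I_n^c(t)$ and $\overline{I}_n^c(t)$: the real-parameter case yields the non-oscillatory estimate \eqref{asymp3} (and recovers \eqref{eq. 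I asympt as t to infty}), while the imaginary-parameter case yields the oscillatory estimates \eqref{asymp1} and \eqref{asymp2}.

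I would first establish \eqref{asymp3} directly, adapting the strategy used in \cite{CHJSV} to prove \eqref{eq. I asympt as t to infty}. Starting from the generating function for $\overline{I}_n^c(t)$ derived in \cite{KS22}, Cauchy's formula writes
\[
\overline{I}_n^c(t)=\frac{1}{2\pi i}\oint_{|z|=\rho} G(z;c,n)\,z^{-t-1}\,dz
\]
for a suitable radius $\rho$. After a rescaling $z\mapsto(1-|c|)z$ the integrand takes the standard Laplace form $e^{t\Phi(z)}h(z,n)\,dz$ with $\Phi$ and $h$ independent of $t$. For real $c$ with $|c|<1$, $\Phi$ has a unique real saddle $z_0$ inside the region of analyticity (this is precisely what the hypothesis $|c|<1$ guarantees), and the Gaussian estimate $\sqrt{2\pi/(-t\Phi''(z_0))}\,h(z_0,n)\,e^{t\Phi(z_0)}$ produces the amplitude $(1-|c|)^{-t+1/2}/\sqrt{2\pi t|c|}$; the sign factor $(\mathrm{sgn}(c))^n$ comes from $(c/2)^n$ in \eqref{newcIBessel}.

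For \eqref{asymp1} and \eqref{asymp2} I invoke the identities above and repeat the corresponding steepest-descent computation (the one from \cite{CHJSV} for \eqref{asymp1}, and the one just outlined for \eqref{asymp2}) with the real parameter replaced by $ic$. The unique real saddle splits into a complex-conjugate pair of saddles whose joint leading-order contribution is $2\,\mathrm{Re}\{\cdot\}$ and therefore produces a cosine. Writing $1\pm ic=\sqrt{1+c^2}\,e^{\pm i\theta}$ with $\theta\in(0,\pi/2)$ defined by $\cos\theta=(1+c^2)^{-1/2}$, the real exponential $(1+|c|)^{t+1/2}$ continues to $(1+c^2)^{(t+1/2)/2}e^{\pm i(t+1/2)\theta}$ and $(1-|c|)^{-t+1/2}$ continues to $(1+c^2)^{(-t+1/2)/2}e^{\mp i(t-1/2)\theta}$. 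The steepest-descent Gaussian prefactor contributes an extra $e^{-i\pi/4}$ at each saddle, and the external factor $(-i)^n=e^{-in\pi/2}$ supplies the $n\pi/2$ phase shift; summing the two conjugate contributions yields the cosines with arguments $(t+\tfrac12)\theta-\pi/4+n\pi/2$ and $(t-\tfrac12)\theta-\pi/4+n\pi/2$ in \eqref{asymp1} and \eqref{asymp2} respectively.

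The main technical obstacle is justifying this analytic continuation of the saddle-point estimate as the parameter rotates from the real to the imaginary axis. As $c$ is rotated, the dominant saddle of $\Phi$ moves off the real axis into a conjugate pair, the integration contour must be deformed to pass through both saddles, and one must verify that no singularity of the integrand is crossed during the deformation and that both saddles contribute at the same leading order. Once these checks are in place, the claimed amplitudes, the half-integer powers of $1+c^2$, and the phase shifts in \eqref{asymp1} and \eqref{asymp2} fall out of the routine bookkeeping of the Gaussian prefactor, the branches of $\sqrt{1\pm ic}$, and the external $(-i)^n$.
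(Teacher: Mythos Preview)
Your route is genuinely different from the paper's. The paper never touches steepest descent or generating-function contour integrals for Theorem~\ref{th_asymp}. Instead, for \eqref{asymp1} and \eqref{asymp2} it rewrites the ${}_2F_1$ as a Legendre function via \eqref{eq. hypergeom = Leg}, applies the reflection \eqref{eq. Leg at negatives}, and then reads off the large-degree asymptotic \eqref{eq. Leg asymptotics}; a Stirling estimate on the residual Gamma ratio finishes. For \eqref{asymp3} it first applies the Pfaff transformation \eqref{eq. hypergom transf} to reach the representation \eqref{eq. I bar repr}, and then invokes Watson's second formula \eqref{eq: Watson fla 2} with $\lambda=t/2$; one of the two terms is exponentially subdominant and the other gives the stated answer. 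Everything is off-the-shelf: no contours, no saddles, no parameter rotation.

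Your saddle-point plan is a legitimate alternative and, if executed, has the merit of being self-contained and of treating all three cases uniformly via the identities $J_n^c=(-i)^nI_n^{ic}$, $\overline{J}_n^c=(-i)^n\overline{I}_n^{ic}$. But as written it is a sketch, and the part you flag as the ``main technical obstacle'' is exactly the part that carries all the weight: you must \emph{actually} identify the two saddles for imaginary parameter, show the original circle can be deformed to the steepest-descent contour through both without crossing the branch cut of the square root in the generating function, and check that neither saddle dominates the other. None of this is done. By contrast, the paper's approach sidesteps every one of these issues by quoting classical asymptotics for $P_\nu^\mu(\cos\theta)$ and for ${}_2F_1$ with a large parameter, so the proof reduces to bookkeeping with Gamma-function ratios.
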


Functions $J_{n}^{c}\left( t\right)$ and $I_{n}^{c}\left( t\right) $ are equal to zero when $n>t$, however, $\overline{J}_{n}^{c}\left( t\right)$ and $\overline{I}_{n}^{c}\left( t\right)$ are nonzero when $n>t$, hence it is of interest to deduce their asymptotic behaviour when $t$ is fixed and $n\to \infty$. This was carried out in Proposition \ref{asympJn} below.

\medskip

We also study the Laplace transform $\mathcal{L}_{\partial_t}$ associated  $\partial_t$ of functions $J_n^c$ and $I_n^c$  and the Laplace transform $\mathcal{L}_{\overline{\partial}_t}$ associated to $\overline{\partial}_t$ of functions $\overline{J}_{n}^{c}$ and $\overline{I}_{n}^{c}$ (precise definitions of $\mathcal{L}_{\partial_t}$ and $\mathcal{L}_{\overline{\partial}_t}$ are given in Section \ref{sec.
Laplace transf}) and prove the following theorem.

\begin{theorem}
\label{thm: Laplace tr} For $n\in \mathbb{N}_{0}$, $c\in \mathbb{C}%
\setminus \{0\}$ and any $z\in \mathbb{C}$, $z\neq \pm ic$, we have
\begin{equation}\label{eq. Lapl J}
\mathcal{L}_{\partial _{t}}\{J_{n}^{c}\}(z)=\mathcal{L}_{\overline{\partial }%
_{t}}\{\overline{J}_{n}^{c}\}(z)=\frac{c^{-n}\left( \sqrt{z^{2}+c^{2}}%
-z\right) ^{n}}{\sqrt{z^{2}+c^{2}}},
\end{equation}
while for $n\in \mathbb{N}_{0}$, $c\in \mathbb{C}%
\setminus \{0\}$ and any $z\in \mathbb{C}$, $z\neq \pm c$, we have
\begin{equation}\label{eq. Lapl I}
\mathcal{L}_{\partial _{t}}\{I_{n}^{c}\}(z)=\mathcal{L}_{\overline{\partial }%
_{t}}\{\overline{I}_{n}^{c}\}(z)=\frac{c ^{-n}\left(z- \sqrt{%
z^{2}-c^{2}}\right) ^{n}}{\sqrt{z^{2}-c^{2}}}.
\end{equation}
\end{theorem}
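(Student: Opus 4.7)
The plan is to expand each discrete Bessel function as a simple power series in $c$ whose general term is either a falling or a rising factorial in $t$, and then apply the corresponding Laplace transform termwise. First, I would apply Gauss's duplication identity $(a)_k(a+\frac{1}{2})_k=(2a)_{2k}/4^k$ with $a=(n\pm t)/2$ to the $_{2}F_{1}$ series appearing in \eqref{cJBessel}--\eqref{newcIBessel}. Combining the resulting single Pochhammer symbol with the prefactor (via $(-t)_n(n-t)_{2k}=(-1)^n\,t^{\underline{n+2k}}$ and $(t)_n(n+t)_{2k}=(t)_{n+2k}$, together with $n!(n+1)_k=(n+k)!$) telescopes all four definitions to
\[
J_n^c(t)=\sum_{k=0}^{\infty}\frac{(-1)^k(c/2)^{n+2k}\,t^{\underline{n+2k}}}{(n+k)!\,k!}, \qquad
\overline{J}_n^c(t)=\sum_{k=0}^{\infty}\frac{(-1)^k(c/2)^{n+2k}\,(t)_{n+2k}}{(n+k)!\,k!},
\]
with the analogous expansions for $I_n^c$ and $\overline{I}_n^c$ obtained by deleting the factor $(-1)^k$. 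Here $t^{\underline{m}}=t(t-1)\cdots(t-m+1)$ is the falling factorial and $(t)_m=t(t+1)\cdots(t+m-1)$ is the rising one.

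Next I would establish the two building-block identities
\[
\mathcal{L}_{\partial_t}\bigl\{t^{\underline{m}}\bigr\}(z)=\mathcal{L}_{\overline{\partial}_t}\bigl\{(t)_m\bigr\}(z)=\frac{m!}{z^{m+1}},
\]
both of which follow from the negative-binomial series once the respective Laplace kernels are written out. This single calculation is the structural reason for the equality announced in the theorem: the forward transform acts on falling factorials exactly the way the backward transform acts on rising factorials, so the term-by-term transforms of the two $J$-type series (and likewise of the two $I$-type series) are visibly identical. Applying this identity termwise yields
\[
\mathcal{L}_{\partial_t}\{J_n^c\}(z)=\mathcal{L}_{\overline{\partial}_t}\{\overline{J}_n^c\}(z)=\frac{(c/2)^n}{z^{n+1}}\sum_{k=0}^{\infty}\binom{n+2k}{k}\!\left(-\frac{c^2}{4z^2}\right)^{\!k},
\]
and the analogous formula without the minus sign for $I_n^c$ and $\overline{I}_n^c$.

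The final step is to evaluate this generating function in closed form via the classical Catalan--Lagrange identity
\[
\sum_{k=0}^{\infty}\binom{n+2k}{k}x^k=\frac{1}{\sqrt{1-4x}}\!\left(\frac{1-\sqrt{1-4x}}{2x}\right)^{\!n},
\]
applied with $x=-c^2/(4z^2)$ and $x=c^2/(4z^2)$ respectively. Routine algebra using $\sqrt{1-4x}=\sqrt{z^2+c^2}/z$ (resp.\ $\sqrt{z^2-c^2}/z$) then collapses the expression to the right-hand sides of \eqref{eq. Lapl J} and \eqref{eq. Lapl I}. The main technical obstacles I anticipate are (i) justifying the interchange of the Laplace summation with the $k$-sum, which is straightforward in a region where $|z|$ is large and all series converge absolutely, and extends to the punctured $z$-plane stated in the theorem by analytic continuation, and (ii) tracking the branches of the square roots so that both sides represent the same holomorphic function on $\mathbb{C}\setminus\{\pm ic\}$ (respectively $\mathbb{C}\setminus\{\pm c\}$).
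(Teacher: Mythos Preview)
Your approach is correct and genuinely different from the paper's. The paper does not work termwise: it quotes closed-form generating functions $g_n^c(z)=\sum_t I_n^c(t)z^t$ and $\overline g_n^c(z)=\sum_t \overline I_n^c(t)z^t$ from the references \cite{CHJSV} and \cite{KS22}, observes that the two Laplace transforms are simply $\frac{1}{1+z}g_n^c\bigl(\frac{1}{1+z}\bigr)$ and $\frac{1}{1-z}\overline g_n^c(1-z)$, and checks by direct substitution that both land on the stated right-hand side; the $J$-versions are then obtained from the $I$-versions via $J_n^c=i^nI_n^{-ic}$. So in the paper the equality $\mathcal L_{\partial_t}\{J_n^c\}=\mathcal L_{\overline\partial_t}\{\overline J_n^c\}$ appears only \emph{a posteriori}, after two separate closed-form evaluations. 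Your route is more self-contained---the polynomial expansion you derive is Proposition~\ref{propJ} of the paper, and the Catalan--Lagrange identity replaces the cited generating functions---and it gives a structural explanation for the coincidence: the forward transform on falling factorials and the backward transform on rising factorials both produce $m!/z^{m+1}$, so the two Laplace transforms agree term by term before any summation. What the paper's approach buys is that convergence and analytic continuation are inherited wholesale from the known generating functions, whereas you must justify the interchange of sums (for the backward case the $k$-series is genuinely infinite and converges only for $|c|<1$, so you will need analytic continuation in $c$ as well as in $z$). One small point to watch: your identity $\mathcal L_{\overline\partial_t}\{(t)_m\}(z)=m!/z^{m+1}$ at $m=0$ requires the backward Laplace sum to start at $t=1$; the paper's displayed formula \eqref{LaplaceBW} writes $t=0$, but the nabla-integral convention (and consistency with the quoted $\overline g_n^c$, which vanishes at the origin) shows the lower limit should indeed be $t=1$, so your computation goes through.
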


\begin{remark}\rm
According to \cite[formulas 17.13.103 and 17.13.109]{GR}, the right-hand side of \eqref{eq. Lapl J}
equals the classical Laplace transform of the Bessel function of the first
kind $\mathcal{J}_{n }\left( c x\right)$ at $z$, for $\mathrm{Re}(z)>|\mathrm{Im}(c)|$ while the right-hand side of \eqref{eq. Lapl I} equals the Laplace transform of the modified
Bessel function $\mathcal{I}_{n}\left( cx\right) $ at $z$, for $\mathrm{Re}(z)>|\mathrm{Re}(c)|$.

In the terminology of \cite[p. 1298]{DGJMR07}, this means that classical Bessel functions $\mathcal{J}_{n }\left( c x\right)$ and $\mathcal{I}_{n}\left( cx\right) $ viewed as functions of $x\in\mathbb{R}$ are \emph{shadow} functions for $J_{n}^{c}(t)$, $\overline{J}_{n}^{c}(t)$ and $I_{n}^{c}(t)$, $\overline{I}_{n}^{c}(t)$ respectively. Hence, those functions are indeed the \emph{appropriate timescale analogues} of classical Bessel functions.\footnote{We are thankful to Tom Cuchta for this remark.}
\end{remark}

\subsection{Discrete time wave equation on integers}

The classical wave equation in one dimension is the equation
\begin{equation*}
\frac{\partial ^{2}u(x;t)}{\partial ^{2}t}=c^{2}\frac{\partial ^{2}u(x;t)}{%
\partial ^{2}x},
\end{equation*}%
where $c>0$ is the propagation speed, $t\in \lbrack 0,\infty )$ is the time
variable, $x\in \mathbb{R}$ is a spatial variable and derivatives are
classical partial derivatives of real functions of two real variables.

In a more general setting, the wave equation on timescale $T$ on the spatial
space $X$ with the propagation speed $c>0$ can be viewed as the equation
\begin{equation}  \label{eq. wave general}
\Delta_t^2u(x;t) + c^2 \Delta_X u(x;t)=0,
\end{equation}
where $\Delta_X$ is usually the (weighted) Laplacian
on the spatial space $X$ (or some other generalization of the second derivative in
the space variable, e.g. fractional Laplacian, see \cite{G-CKLW19} or \cite{LM23}) and $\Delta_t$ stands for the delta (timescale) derivative with
respect to $t$ on a given timescale $T$, as described e.g. in \cite{BP1} and
\cite{BP2}. The initial conditions on $u$ and its derivative with respect to
$t$ for \eqref{eq. wave general} are defined in a natural way, depending on
the timescale.

When the timescale $T=[0,\infty)$ and $X$ is a homogeneous tree, the explicit
solution to \eqref{eq. wave general} with natural initial conditions was
deduced in \cite{MS99}; see also \cite{TS16}. In the special situation of $2$-regular tree (when $
X=\mathbb{Z}$) two independent fundamental solutions were given in terms of
the classical $J$-Bessel function \cite[Proposition 3]{MS99}, while the
asymptotic behaviour of the energy was studied in \cite{Me99}.

\medskip

When the timescale is discrete; more precisely when $T=\mathbb{Z}$, there
are different ways of discretizing the continuous time second derivative $%
\frac{\partial^2 }{\partial^2 t}$. For example, the forward difference $\partial_t$ and the
backward difference $\overline{\partial}_t$ are two natural delta derivatives
in time $t\in T=\mathbb{Z}$.\footnote{
The reason for a chosen notation for those derivatives stems from the fact
that $\overline{\partial}_t \partial_t =-\Delta_{\mathbb{Z}}$, where $%
\Delta_{\mathbb{Z}}$ is the combinatorial Laplacian on $\mathbb{Z}$. This is reminiscent
of the relation $\overline{\partial}_z\partial_z = -\frac{1}{4}
\Delta_{\mathbb{R}^2}$ between the Wirtinger derivatives $%
\partial_z= \frac{1}{2}\left( \frac{\partial}{\partial x} - i \frac{\partial%
}{\partial y}\right)$, $\overline{\partial}_z= \frac{1}{2}\left( \frac{%
\partial}{\partial x} + i \frac{\partial}{\partial y}\right)$  in $z=x+iy$ and the
Laplacian $\Delta_{\mathbb{R}^2}$ on $\mathbb{R}^2$.}

Therefore, the continuous time second derivative $\frac{\partial^2}{%
\partial^2 t}$ may be discretized as $\partial_t^2$ or $\overline{\partial}%
_t^2$, depending on whether one chooses the forward or the backward
difference operator as a discretization.

There are other possibilities; for example one may discretize $\frac{\partial^2}{%
\partial^2 t}$ as $\partial_t\overline{\partial}_t =
\overline{\partial}_t\partial_t= -\Delta_{\mathbb{Z}}$. Such a
discretization is studied in \cite{CP94}, where an explicit solution of the wave equation
on a homogeneous tree was found. The same timescale second derivative was used in \cite{AMPS13}, where the shifted wave equation on a homogeneous tree of degree $q+1>2$ is solved by applying a discrete
version of \'{A}sgeirsson's mean value theorem and by using the inverse dual
Abel transform that can be explicitly computed on the homogeneous tree.
A more general discrete wave
equation in which both operators are second-order differentials in different
timescales was studied in \cite[Section 3.2]{Ja06}.

In this paper, we will be interested in discrete analogues of the equation %
\eqref{eq. wave general} when the timescale $T=\mathbb{Z}$ and the spatial
space is $X=\mathbb{Z}$. The Laplacian on $X=\mathbb{Z}$ is the
combinatorial Laplacian on $X$ viewed as a 2-regular tree, in which every
vertex $n\in \mathbb{Z}$ is adjacent only to its neighbouring vertices $%
n-1,\,n+1$. In other words, in the setting of this paper, the action of the Laplacian $\Delta _{\mathbb{Z}}$ on any function $f:\mathbb{Z} \to \mathbb{R}$ is defined as
\begin{equation*}
\Delta _{X}f(n)=\Delta _{\mathbb{Z}}f(n)=2f(n)-f(n+1)-f(n-1),\quad n\in
\mathbb{Z}.
\end{equation*}%
The difference equation
\begin{equation}  \label{discwaveq1}
\partial _{t}^{2}u\left( n;t\right) +c^{2}\Delta _{\mathbb{Z}}u(n,t)=0,\,\, n\in
\mathbb{Z},\,\,t\in \mathbb{N}_{0},
\end{equation}%
with initial conditions
\begin{equation}  \label{dweqcon1}
u\left( n;0\right) =\left\{
\begin{array}{ll}
1 & \text{if }n=0, \\
0 & \text{if }n\neq 0,%
\end{array}%
\right. ,\quad \partial _{t}u\left( n;0\right) =0,\text{ \ }n\in \mathbb{Z},
\end{equation}%
was studied in \cite[Section 3]{Slav18}, where it is proved that the
function
\begin{equation}  \label{J2n}
u_{1}\left( n;t\right) =J_{2\left\vert n\right\vert }^{2c}\left( t\right)
\text{, \ }n\in \mathbb{Z}\text{, \ }t\in \mathbb{N}_{0}\text{,}
\end{equation}%
is its (fundamental) solution. In \cite[Section 3]{Slav18} Slav\' ik also deduced the solution to the wave equation \eqref{discwaveq1} with general initial conditions.

In Section \ref{wave_section} we study the analogue of the
equation \eqref{discwaveq1} with the derivative $\partial _{t}^{2}$
replaced by $\overline{\partial }_{t}^{2}$ with arbitrary initial
conditions, given by bounded real sequences indexed by integers. In Theorem \ref{thm. fund sol} below we prove that the first fundamental solution to \eqref{discwaveq1} with $\partial _{t}^{2}$ replaced by $\overline{\partial }_{t}^{2}$ subject to initial conditions \eqref{dweqcon1} is
$$
u_{1}\left( n;t\right) =\overline{J}_{2\left\vert n\right\vert }^{2c}\left( t\right)
\text{, \ }n\in \mathbb{Z}\text{, \ }t\in \mathbb{N}_{0}.
$$
We find the second fundamental solution and express the general solution as
a series involving $\overline{J}_{2\left\vert n\right\vert }^{2c}$ and the
initial data, see Theorem \ref{thgensol} for the exact statement.

Asymptotic behaviour of discretizations of $J$-Bessel functions proved in parts (i) and (ii) of Theorem \ref{th_asymp} yields asymptotic behaviour of fundamental solutions to \eqref{discwaveq1} subject to initial conditions \eqref{dweqcon1} with timescale derivatives being both the forward and the backward difference. In both cases solutions have oscillatory behaviour as $t\to \infty$, however the amplitude in the case when the time derivative is the forward difference grows exponentially with time, while the amplitude in the case when the time derivative is the backward difference decays exponentially with time; for precise statements, see Corollaries \ref{cor forward asympt} and \ref{cor backward asympt}.

This can be compared with the behaviour of the solution to \eqref{discwaveq1} when $\partial_t^2$ is replaced by $\partial_t \overline{\partial}_t$ studied in \cite{CP94} and which equals to zero when $t-n$ is odd.

\subsection{Organization of the paper}

The structure of the paper is the following: In Section \ref{sec. Prelim}, with the aim to make the paper self-contained, we provide a brief overview of necessary definitions and formulas related to the hypergeometric and Legendre functions and the Laplace transform for the forward and the backward difference. Section \ref{sec. Properties} is devoted to proving some properties of the backward discrete $J$-Bessel and $I$-Bessel functions and deducing their asymptotic behaviour for large $n$. Proofs solution to the discrete wave equation with initial conditions given in terms of arbitrary bounded real sequences indexed by integers. We end the paper by describing the asymptotic behaviour of fundamental solutions of the forward and backward discrete wave equations.

\vspace{1.5cc}
\section{Preliminaries}\label{sec. Prelim}

In this section we recall well known results and definitions that we will need in the sequel. More precisely, we define the hypergeometric function and the Legendre function of the first kind and recall some of their transformation properties and asymptotic behavior as certain parameters tend to infinity. In the last subsection we introduce the (unilateral) Laplace transform in timescales and express the Laplace transform in terms of a certain series when the timescale $T$ is the set of integers and the timescale (delta) derivative is the forward and the backward difference.

\setcounter{subsection}{0}
\subsection{Hypergeometric function}

In this subsection, we recall the basic properties of the Gauss hypergeometric
function defined for complex values of $z$ in the unit disc $|z|<1$ and parameters $\alpha ,\beta \in \mathbb{C}$, $\gamma \in \mathbb{C}\setminus \mathbb{N%
}_{0}$ as the absolutely convergent series
\begin{eqnarray}  \label{eq. hypergeom defn}
\Hypergeometric{2}{1}{\alpha,\beta}{\gamma}{z} &=&\sum\limits_{k=0}^{\infty }%
\frac{\left( \alpha \right) _{k}\left( \beta \right) _{k}}{\left( \gamma
\right) _{k}k!}z^{k}, \\
\left( t\right) _{k} &=&\frac{\Gamma \left( t+k\right) }{\Gamma \left(
t\right) }=\left\{
\begin{array}{ll}
t\left( t+1\right) \cdots \left( t+k-1\right) & \text{for }k\in \mathbb{N},
\\
1 & \text{for }k=0.%
\end{array}%
\right.  \notag
\end{eqnarray}

If $\alpha $ or $\beta $ is a nonpositive integer, then the series \eqref{eq. hypergeom defn} reduces to a finite sum
and converges everywhere. Otherwise, the hypergeometric series is convergent
in the unit disc $\left\vert z\right\vert <1$, but can be
analytically continued into the complex plane cut along $\left[ 1,\infty %
\right] $ (see e.g. \cite[Section 15.2]{OLBC} or \cite[Section 9.1]{Leb}). The
analytic continuation is denoted by the same symbol $%
\Hypergeometric{2}{1}{\alpha,\beta}{\gamma}{z}$.

For the sake of brevity and simplicity, we will use $F\left( \alpha ,\beta ;\gamma
;z\right) $ to denote the Gaussian hypergeometric function instead of $\Hypergeometric{2}{1}{\alpha,\beta}{%
\gamma}{z}$ .

It is obvious from series representation that $F\left( \alpha ,\beta ;\gamma;z\right)= F\left( \beta ,\alpha ;\gamma;z\right)$. We will need several recursion formulas
for Gauss hypergeometric function, which we recall from  \cite[Section 9.137]{GR} and \cite[formula 15.2.18]{AS} and list in the following lemma.
\begin{lemma}
\label{GHFrecurs} The Gauss hypergeometric function satisfies the following
recursion relations:
\begin{enumerate}[label=\normalfont(\roman*)]

\item $\gamma F\left( \alpha ,\beta ;\gamma;z\right) -\gamma F\left( \alpha
,\beta+1 ;\gamma;z\right) +\alpha zF\left( \alpha+1 ,\beta+1
;\gamma+1;z\right) =0$,

\item $\gamma F\left( \alpha ,\beta ;\gamma ;z\right) -\gamma F\left( \alpha
+1,\beta ;\gamma ;z\right)+\beta zF\left( \alpha +1,\beta +1;\gamma
+1;z\right)=0$,

\item $\gamma F\left( \alpha ,\beta ;\gamma ;z\right) -\left( \gamma -\alpha
\right) F\left( \alpha ,\beta ;\gamma +1;z\right) -\alpha F\left( \alpha+1
,\beta ;\gamma+1 ;z\right)=0$,

\item $\left( \gamma -\alpha -\beta \right) F\left( \alpha ,\beta ;\gamma
;z\right) -\left( \gamma -\alpha \right) F\left( \alpha -1,\beta ;\gamma
;z\right) \newline
+\beta \left( 1-z\right) F\left( \alpha ,\beta +1;\gamma ;z\right) =0$.
\end{enumerate}
\end{lemma}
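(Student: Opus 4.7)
The plan is to verify each identity by comparing coefficients of $z^{k}$ on both sides of the claimed relation. Since for generic parameter values the hypergeometric series converge absolutely on $|z|<1$, verifying the identity coefficient-by-coefficient establishes it on the disc; analytic continuation in $z$ then extends it to the cut plane $\mathbb{C}\setminus[1,\infty)$, and analyticity in the parameters (with the usual removable-singularity argument at $\gamma\in-\mathbb{N}_{0}$) handles the exceptional parameter values by continuity.

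The algebraic engine behind every step consists of two Pochhammer manipulations,
\[(\alpha)_{k}=\alpha(\alpha+1)_{k-1}\ \ (k\geq 1),\qquad (\alpha)_{k+1}=(\alpha+k)(\alpha)_{k}=\alpha(\alpha+1)_{k},\]
from which one deduces the difference formula $(\beta+1)_{k}-(\beta)_{k}=k(\beta+1)_{k-1}$ and the quotient identity $\gamma/(\gamma)_{k}=(\gamma+k)/(\gamma+1)_{k}$.

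For identity (i), the coefficient of $z^{k}$ (with $k\geq 1$) in the difference $\gamma F(\alpha,\beta;\gamma;z)-\gamma F(\alpha,\beta+1;\gamma;z)$ simplifies, via the difference formula and $(\gamma)_{k}=\gamma(\gamma+1)_{k-1}$, to $-(\alpha)_{k}(\beta+1)_{k-1}/[(\gamma+1)_{k-1}(k-1)!]$, and this is exactly the negative of the $z^{k}$ coefficient of $\alpha z\,F(\alpha+1,\beta+1;\gamma+1;z)$ (after using $\alpha(\alpha+1)_{k-1}=(\alpha)_{k}$); the $k=0$ contribution is trivially zero since the third term starts at $z^{1}$. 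Identity (ii) follows from (i) by the symmetry $F(\alpha,\beta;\gamma;z)=F(\beta,\alpha;\gamma;z)$. For (iii), factoring $(\beta)_{k}/[(\gamma+1)_{k}k!]$ out of the $z^{k}$ coefficient and applying both identities of the previous paragraph reduces the vanishing statement to the scalar identity
\[(\alpha)_{k}\bigl[(\gamma+k)-(\gamma-\alpha)-(\alpha+k)\bigr]=0,\]
which is immediate.

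The main obstacle is identity (iv), because the factor $(1-z)$ couples the $z^{k}$ and $z^{k-1}$ coefficients of $F(\alpha,\beta+1;\gamma;z)$. After expanding $(1-z)$ and gathering the four resulting contributions to the $z^{k}$ coefficient, then reducing to a common form via $(\alpha-1)_{k}=(\alpha-1)(\alpha)_{k-1}$, $\beta(\beta+1)_{k-1}=(\beta)_{k}$ and $(\gamma)_{k-1}(\gamma+k-1)=(\gamma)_{k}$, the vanishing of the coefficient reduces to the polynomial identity in $\alpha,\beta,\gamma,k$
\[(\gamma-\alpha-\beta)(\alpha+k-1)-(\gamma-\alpha)(\alpha-1)+(\beta+k)(\alpha+k-1)-(\gamma+k-1)k=0,\]
which is a direct expansion: the quadratic terms in $\alpha$, the terms $\alpha\beta$, $\alpha k$, $k^{2}$, and the $\gamma$-linear terms each cancel pairwise. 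The $k=0$ case collapses to the one-line identity $(\gamma-\alpha-\beta)-(\gamma-\alpha)+\beta=0$.
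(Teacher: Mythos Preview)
Your coefficient-comparison argument is correct: each of the four Pochhammer reductions checks out, and the polynomial identity you isolate for (iv) does vanish term by term. The paper, however, does not prove this lemma at all; it simply quotes the relations from Gradshteyn--Ryzhik \S9.137 and Abramowitz--Stegun 15.2.18 as standard contiguous relations. So your approach is not just different but strictly more self-contained than the paper's, which treats the lemma as a citation rather than a result to be established.
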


The hypergeometric function satisfies many transformation formulas. In the sequel, we will use the following formula, which we quote from \cite[Section 9.131]{GR}:
\begin{equation}\label{eq. hypergom transf}
F(\alpha,\beta;\gamma;z) = (1-z)^{- \alpha}F\left(\alpha,\gamma-\beta;\gamma;\frac{z}{z-1}\right)
\end{equation}
and which is valid for $|\arg(1-z)|<\pi$.

\medskip

We will also need the asymptotic behaviour of $F(\alpha,\beta;\gamma;z)$ when some of the parameters are large. More precisely, we will make use of the following two asymptotic formulas which we quote from \cite[Section 9, p. 289]{Watson}. Let $\alpha$, $\beta$, $\gamma$ be arbitrary (fixed) complex numbers, $z=\cosh \zeta =\xi + i\eta \in \mathbb{C}\setminus (-\infty, 1]$ with $\xi, \, \eta$ real, $\xi\geq 0$, $-\pi< \eta\leq \pi$. The first formula states that
\begin{multline}\label{eq: Watson fla 1}
\left( \frac{z-1}{2}\right) ^{-\alpha -\lambda }F\left( \alpha +\lambda
,\alpha +\lambda -\gamma +1;\alpha -\beta +2\lambda +1;\frac{2}{1-z}\right)
\\
\sim \frac{2^{\alpha +\beta }\Gamma \left( \alpha -\beta +2\lambda +1\right)
}{\Gamma \left( \alpha +\lambda -\gamma +1\right) \Gamma \left( \gamma
-\beta +\lambda \right) }e^{-\left( \alpha +\lambda \right) \zeta }\left(
1-e^{-\zeta }\right) ^{\frac{1}{2}-\gamma }\left( 1+e^{-\zeta }\right)
^{\gamma -\alpha -\beta -\frac{1}{2}}\\
\times \sum\limits_{s=0}^{\infty }c_{s}^{\prime }\frac{\Gamma \left( s+\frac{%
1}{2}\right) }{\lambda ^{s+\frac{1}{2}}},
\end{multline}
as $|\lambda| \to +\infty$, where $\lambda$ is such that $|\arg \lambda| \leq \pi -\delta<\pi $. Constants $c_s'$ are independent of $\lambda$ and $c_0'=1$.
\vskip .06in
Assume $|\arg \lambda| \leq \pi/2 -\delta<\pi/2$. The second formula from \cite[Section 9]{Watson} states that
\begin{multline}\label{eq: Watson fla 2}
F\left( \alpha +\lambda ,\beta -\lambda ;\gamma ;\frac{1}{2}-\frac{1}{2}%
z\right) \\
\sim \frac{\Gamma \left( 1-\beta +\lambda \right) \Gamma \left( \gamma\right) }{%
\pi \Gamma \left( \gamma -\beta +\lambda \right) }2^{\alpha +\beta -1}\left(
1-e^{-\zeta }\right) ^{\frac{1}{2}-\gamma }\left( 1+e^{-\zeta }\right)
^{\gamma -\alpha -\beta -\frac{1}{2}} \\
\times \left[ e^{\left( \lambda -\beta \right) \zeta
}\sum\limits_{s=0}^{\infty }c_{s}\frac{\Gamma \left( s+\frac{1}{2}\right) }{%
\lambda ^{s+\frac{1}{2}}}+e^{\mp \pi i\left( \frac{1}{2}-\gamma \right)
}e^{-\left( \lambda +\alpha \right) \zeta }\sum\limits_{s=0}^{\infty
}c_{s}^{\prime }\frac{\Gamma \left( s+\frac{1}{2}\right) }{\lambda ^{s+\frac{%
1}{2}}}\right] ,
\end{multline}%
as $|\lambda| \to +\infty$, where $c_s$, $c_s'$ are independent of $\lambda$, $c_0=c_0'=1$ and in the second term the upper or lower sign is taken according as $\mathrm{Im}(z)\gtrless 0$.

\subsection{Legendre function of the first kind}

The Legendre function of the first kind of degree $\nu$ and order $\mu$, where $\nu,\mu\in\mathbb{C}$ is defined as
$$
P_{\mu}^{\nu}(z)= \frac{1}{\Gamma(1-\mu)}\left(\frac{z+1}{z-1}\right)^{\frac{\mu}{2}} F\left(-\nu,\nu+1;1-\mu;\frac{1-z}{2}\right).
$$

When $\beta=\alpha+1/2$, the hypergeometric function $F(\alpha,\beta;\gamma;z)$ can be expressed in terms of the Legendre function $P_{\mu}^{\nu}$. We quote here the formula 15.4.11 from \cite{AS}, which is valid for real, negative values of $z=x$ and any complex numbers $a,\,c$
\begin{equation}\label{eq. hypergeom = Leg}
F(a,a+1/2;c;x) = 2^{c-1}\Gamma(c)(-x)^{1/2-c/2} (1-x)^{c/2-a-1/2} P_{2a-c}^{1-c}\left[(1-x)^{-1/2}\right].
\end{equation}

When the order $\mu$ is an integer, say $m$, combining formulas 8.2.1. and 8.2.5. of \cite{AS}, we arrive at the following equation
\begin{equation}\label{eq. Leg at negatives}
  P_{-\nu-1}^{-m}(z)=P_{\nu}^{-m}(z)=\frac{\Gamma(\nu-m+1)}{\Gamma(\nu+m+1)}P_{\nu}^m(z).
\end{equation}

When $z=\cos\theta \in(0,1)$ in the sequel we will need the asymptotic formula for $P_{\nu}^{\mu}(\cos \theta)$, which we quote from \cite[formula 8.721.3]{GR} (see also \cite{Th})
\begin{equation}\label{eq. Leg asymptotics}
  P_{\nu}^{\mu}(\cos \theta)=\frac{2}{\sqrt{\pi}}\frac{\Gamma(\nu+m+1)}{\Gamma(\nu+3/2)}\frac{\cos\left( \left(\nu+\frac{1}{2}\right)\theta - \frac{\pi}{4} + \frac{\mu \pi}{2} \right)}{\sqrt{2 \sin \theta}}\left(1+O\left(\frac{1}{\nu}\right)\right),
\end{equation}
as $|\nu| \to\infty$.

\subsection{Laplace transform for the forward and the backward difference}\label{sec. Laplace transf}

The unilateral Laplace transform on timescales was introduced by Bohner and Peterson in \cite{BPLapl} and further used, generalized  and studied in numerous works.
In this section, we recall results from \cite{BG10} on Laplace transform on timescale $T=\mathbb{Z}$, with respect to both the forward difference operator $\partial_t$ and the backward difference operator $\overline{\partial}_t$. Given any timescale $T$ such that $0\in T$ and $\sup T=\infty$, the Laplace transform of the regulated function $x:T\to\mathbb{R}$ is defined by
$$
\mathcal{L}_{\Delta}\{x\}(z)=\int\limits_{0}^{\infty }\frac{x(t)}{%
e_{z}(t+\mu^{\ast}(t),0)}\Delta t,
$$
where $\Delta$ is the delta derivative on the timescale $T$; the integral is improper integral with respect to the derivative $\Delta$, $e_{z}(t,0)$ is the exponential function in timescale $T$ and $z$ belongs to a subset of complex numbers (depending on the function $x$) such that the above improper integral is convergent. The function $\mu^{\ast}(t)$ depends on the definition of the $\Delta$-derivative in timescale $T$.

The set of points $z$ for which the above integral converges is generally not easy to find. When $T=\mathbb{Z}$ and $\Delta$ is either the forward or the backward difference, the region of convergence will be a certain disc in the extended complex plane. We refer the interested reader to the paper \cite{JD21} in which a very detailed study of the Laplace transform, including the discussion on the region of convergence, is presented.


In this paper, we are interested only in the case when $T=\mathbb{Z}$ and $\Delta$ is either the forward or the backward difference. In our setup $\mu^{\ast}(t)=1$, for all $t$ when $\Delta=\partial_t$ and $\mu^{\ast}(t)=-1$ for all $t$ when $\Delta=\overline{\partial}_t$.

When $\Delta=\partial_t$, the exponential function is $e_{z}(t+1,0)=(1+z)^{t+1}$ (see, e.g. \cite{Cu15}, p. 29, formula (49)) and hence the Laplace transform of the sequence  $x(t):\mathbb{N}_0\to \mathbb{C}$ is given by
\begin{equation}
\mathcal{L}_{\partial _{t}}\{x\}(z)=\sum_{t=0}^{\infty }\frac{x(t)}{%
(1+z)^{t+1}},  \label{LaplaceFW}
\end{equation}
for all complex values of $z\neq -1$ such that the above series converges.
When $\Delta=\overline{\partial}_t$, the exponential function is given by $\hat{e}_{z}(t-1,0)=(1-z)^{-(t-1)}$, see \cite{Cu15}, p. 29, formula (50), hence the Laplace transform of the sequence  $x(t):\mathbb{N}_0\to \mathbb{C}$ in this case is
\begin{equation}
\mathcal{L}_{\overline{\partial }_{t}}\{x\}(z)=\sum_{t=0}^{\infty
}x(t)(1-z)^{t-1},  \label{LaplaceBW}
\end{equation}%
for all complex numbers $z\neq 1$ such that the above series converges.

\vspace{1.5cc}
\section{Properties of backward discrete Bessel Functions}
\label{sec. Properties}

In this section, we derive some basic properties of backward discrete $J$-Bessel and $I$-Bessel functions. We start by stating simple relations between four discrete Bessel functions that stem directly from their definitions \eqref{cJBessel} -- \eqref{newcIBessel}. Namely, we have the following identities which hold true for all $n\in\mathbb{N}_0$ and $t\in\mathbb{Z}$:
$$
\overline{J}_{n}^{c}\left( t\right)=(-1)^n J_n^c(-t), \quad \text{for}\quad c\in \mathbb{C}\setminus\{i\alpha:\, \alpha\in \mathbb{R},\, |\alpha|\geq 1\},
$$
$$
\overline{I}_{n}^{c}\left( t\right)=(-1)^n I_n^c(-t), \quad \text{for} \quad c\in \mathbb{C}\setminus\{\alpha:\, \alpha\in \mathbb{R},\, |\alpha|\geq 1\},
$$
and
$$
I_{n}^{c}\left( t\right)=(-i)^n J_n^{ic}(t), \quad  \overline{I}_{n}^{c}\left( t\right)=(-i)^n\overline{J}_{n}^{ic}\left( t\right),  \quad \text{for} \quad c\in \mathbb{C}\setminus\{\alpha:\, \alpha\in \mathbb{R},\, |\alpha|\geq 1\}.
$$

When $n>t$, it is well known that $I_n^c(t)=J_n^c(t)=0$. Therefore, for $t\in\mathbb{Z}_{<0}$, for a suitable range of $c$ we have that $n>-t$ implies $\overline{J}_{n}^{c}\left( t\right)= \overline{I}_{n}^{c}\left( t\right)=0$.

The following proposition shows that for $t\in\mathbb{N}_0$, the function $J_{n}^{c}\left( t\right) $ can be viewed as a polynomial in variable $c$, while for $t\in\mathbb{Z}_{<0}$  functions $\overline{J}_{n}^{c}\left( t\right)$  and $\overline{I}_{n}^{c}\left( t\right)$ can be viewed as a polynomial in $c$.

\begin{proposition}
\label{propJ}\hfill
\begin{enumerate}[label=\normalfont(\roman*)]
\item Let $t,n\in \mathbb{N}_{0}$ such that $n\leq t$. Set $\ell
=\left\lfloor \left( t-n\right) /2\right\rfloor $ . Then for any $c\in%
\mathbb{C}\setminus\{0\}$, we have that
\begin{equation}\label{eq. J as polynom}
J_{n}^{c}\left( t\right) =\sum\limits_{k=0}^{\ell }\frac{\left( -1\right)
^{k}t!}{k!\left( t-2k-n\right) !\left( n+k\right) !}\left( \frac{c}{2}%
\right) ^{2k+n}.
\end{equation}

\item Let $t\in\mathbb{Z}_{<0}$, $c\in\mathbb{C}\setminus\{0\}$ and $n\in \mathbb{N}_{0}$ such that $n\leq -t$.
Set $\ell =\left\lfloor \left( -t-n\right) /2\right\rfloor$. Then, we have that
\begin{equation*}
\overline{J}_{n}^{c}\left( t\right) =\sum\limits_{k=0}^{\ell }\frac{\left(
-1\right) ^{k+n}\left( -t\right) !}{k!\left( -t-2k-n\right) !\left(
n+k\right) !}\left( \frac{c}{2}\right) ^{2k+n},
\end{equation*}
and
\begin{equation*}
\overline{I}_{n}^{c}\left( t\right) =(-1)^n \sum\limits_{k=0}^{\ell }\frac{\left( -t\right) !}{k!\left( -t-2k-n\right) !\left(
n+k\right) !}\left( \frac{c}{2}\right) ^{2k+n}.
\end{equation*}
\end{enumerate}
\end{proposition}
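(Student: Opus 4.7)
The plan is to prove (i) by a direct manipulation of the terminating hypergeometric series in the definition \eqref{cJBessel}, and then deduce (ii) from (i) via the reflection identities $\overline{J}_{n}^{c}(t)=(-1)^{n}J_{n}^{c}(-t)$ and $\overline{I}_{n}^{c}(t)=(-1)^{n}I_{n}^{c}(-t)$ recorded at the top of Section \ref{sec. Properties}.

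For part (i), I would begin by observing that when $n,t\in\mathbb{N}_{0}$ with $n\le t$, exactly one of the two upper parameters of $F\bigl(\tfrac{n-t}{2},\tfrac{n-t}{2}+\tfrac{1}{2};n+1;-c^{2}\bigr)$ is a non-positive integer: if $t-n=2m$ then $\tfrac{n-t}{2}=-m$, while if $t-n=2m+1$ then $\tfrac{n-t}{2}+\tfrac{1}{2}=-m$. In either case $m=\lfloor(t-n)/2\rfloor=\ell$, so the series truncates at $k=\ell$. Hence, irrespective of parity,
\begin{equation*}
J_{n}^{c}(t)=\frac{(-c/2)^{n}(-t)_{n}}{n!}\sum_{k=0}^{\ell}\frac{\bigl(\tfrac{n-t}{2}\bigr)_{k}\bigl(\tfrac{n-t}{2}+\tfrac{1}{2}\bigr)_{k}}{(n+1)_{k}\,k!}(-c^{2})^{k}.
\end{equation*}

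The bulk of the work is then a routine Pochhammer-symbol calculation. I would apply the duplication identity $(\alpha)_{k}(\alpha+\tfrac{1}{2})_{k}=(2\alpha)_{2k}/4^{k}$ with $2\alpha=n-t$, together with $(-t)_{n}=(-1)^{n}t!/(t-n)!$, $(n+1)_{k}=(n+k)!/n!$, and $(n-t)_{2k}=(t-n)!/(t-n-2k)!$ (valid because $2k\le t-n$ throughout the range of summation). Collecting the powers of $2$ and $c$ and simplifying the $(-1)^{n}(-1)^{2k}=(-1)^{n}$ factors, the factor $(-c/2)^{n}\cdot(-1)^{n}=(c/2)^{n}$ emerges in front, and the summand reduces to $(-1)^{k}\,t!\,(c/2)^{2k+n}/\bigl[k!(t-n-2k)!(n+k)!\bigr]$, which is exactly \eqref{eq. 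J as polynom}.

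For part (ii), the reflection identity $\overline{J}_{n}^{c}(t)=(-1)^{n}J_{n}^{c}(-t)$ (which is immediate from comparing \eqref{cJBessel} and \eqref{newcJBessel} since $(-(-t))_{n}=(t)_{n}\cdot(-1)^{n}$ up to the required sign bookkeeping) reduces the first formula of (ii) to applying part (i) at $-t\in\mathbb{N}_{0}$: the upper limit becomes $\lfloor(-t-n)/2\rfloor$, the $t!$ in (i) becomes $(-t)!$, and the extra $(-1)^{n}$ absorbs into the sign, giving the claimed expression. For the $\overline{I}$-formula, the same argument works once one establishes the analogue of (i) for $I_{n}^{c}$, which is obtained by repeating the computation above verbatim but with $-c^{2}$ replaced by $+c^{2}$; this eliminates the $(-1)^{k}$ in the summand and yields the polynomial expression for $I_{n}^{c}$. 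Combining with $\overline{I}_{n}^{c}(t)=(-1)^{n}I_{n}^{c}(-t)$ produces the stated formula.

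The only real obstacle is bookkeeping: keeping the four $(-1)$-factors (from $(-c/2)^{n}$, from $(-t)_{n}$, from $(n-t)_{2k}$, and from $(-c^{2})^{k}$) straight while simultaneously rewriting the shifted Pochhammers as ratios of factorials. No step requires anything beyond elementary identities, so once the termination of the hypergeometric series is recognized and the duplication formula is in place, both parts fall out by direct substitution.
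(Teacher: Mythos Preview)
Your argument is correct. Part (ii) is handled exactly as in the paper, via the reflection identities $\overline{J}_{n}^{c}(t)=(-1)^{n}J_{n}^{c}(-t)$ and $\overline{I}_{n}^{c}(t)=(-1)^{n}I_{n}^{c}(-t)$. For part (i), however, the paper takes a shorter indirect route: it quotes the polynomial expression for $I_{n}^{c}(t)$ from \cite[Proposition 3.2]{CHJSV} and then obtains \eqref{eq. J as polynom} by substituting $c\mapsto ic$ in the relation $I_{n}^{c}(t)=(-i)^{n}J_{n}^{ic}(t)$. Your approach instead unwinds the terminating hypergeometric series directly, using the duplication identity $(\alpha)_{k}(\alpha+\tfrac12)_{k}=(2\alpha)_{2k}/4^{k}$ together with the standard Pochhammer-to-factorial conversions. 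This is slightly longer but fully self-contained, and in effect reproves the cited result from \cite{CHJSV} rather than importing it. One small slip: your parenthetical justification of the reflection identity is garbled (the identity $(-(-t))_{n}=(t)_{n}$ holds with no extra sign; the $(-1)^{n}$ comes from $(-c/2)^{n}$ versus $(c/2)^{n}$), but since you correctly invoke the identity already stated in Section~\ref{sec. Properties}, this does not affect the argument.
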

\begin{proof}
According to \cite[Proposition 3.2.]{CHJSV}, for any $c\in \mathbb{C} $, we
have
\begin{equation*}
I_{n}^{c}\left( t\right) =\sum\limits_{k=0}^{\ell }\frac{t!}{k!\left(
t-2k-n\right) !\left( n+k\right) !}\left( \frac{c}{2}\right) ^{2k+n}.
\end{equation*}
Combining it with $I_{n}^{c}\left( t\right) =\left( -i\right)
^{n}J_{n}^{ic}\left( t\right) $, we easily deduce \eqref{eq. J as polynom}, which proves part (i). Part (ii) stems from the identities $\overline{J}
_{n}^{c}\left( t\right) =\left( -1\right) ^{n}J_{n}^{c}\left( -t\right) $  and $\overline{I}
_{n}^{c}\left( t\right) =\left( -1\right) ^{n}I_{n}^{c}\left( -t\right) $.
\end{proof}

Transformation formulas for $\partial_t J_n^c$, $\partial_t I_n^c$ and $\overline{\partial}_t\overline{I}_n^c$ have been deduced in \cite{Slav18}, \cite{CHJSV} and \cite{KS22}, respectively (see also \cite{CGW23} for recurrence formulas satisfied by matrix analogues of $J_n^1$). In the following lemma we prove transformation formulas for $\overline{\partial}_t\overline{J}_n^c$.
\begin{lemma}
\label{lemma2} For $c\in \mathbb{C}\setminus\{i\alpha:\, \alpha\in \mathbb{R},\, |\alpha|\geq 1\}$, the discrete $J$-Bessel function $%
\overline{J}_{n}^{c}$ has the following properties:

\begin{enumerate}[label=\normalfont(\roman*)]

\item $\overline{J}_{0}^{c}\left( 0\right) =1$.

\item $\overline{\partial }_{t}\overline{J}_{0}^{c}\left( t\right) =-c%
\overline{J}_{1}^{c}\left( t\right) $ for all $t\geq 1$.

\item $t\overline{\partial }_{t}\overline{J}_{n}^{c}\left( t+1\right) =n%
\overline{J}_{n}^{c}\left( t\right) -ct\overline{J}_{n+1}^{c}\left(
t+1\right) $ for any $n\geq 0$ and $t\geq 0$.

\item $t\overline{\partial }_{t}\overline{J}_{n}^{c}\left( t+1\right) =-n%
\overline{J}_{n}^{c}\left( t\right) +ct\overline{J}_{n-1}^{c}\left(
t+1\right) $ for any $n\geq 1$ and $t\geq 0$.

\item $\overline{\partial }_{t}\overline{J}_{n}^{c}\left( t\right) =\frac{c}{%
2}\left( \overline{J}_{n-1}^{c}\left( t\right) -\overline{J}_{n+1}^{c}\left(
t\right) \right) $ for any $n\geq 1$ and $t\geq 1$.
\end{enumerate}

\begin{proof}
The first statement follows from the definition of $\overline{J}%
_{n}^{c}\left( t\right) $.

Formula (ii) can be written as%
\begin{equation*}
\overline{J}_{0}^{c}\left( t\right) -\overline{J}_{0}^{c}\left( t-1\right)
=-c\overline{J}_{1}^{c}\left( t\right) .
\end{equation*}

Using recursive formula (ii) from Lemma \ref{GHFrecurs} with $\alpha =\frac{%
t-1}{2},\beta =\frac{t}{2},\gamma =1$ and $z=-c^{2}$, and the symmetry of $F$
in the first two arguments, we obtain
\begin{equation*}
F\left( \frac{t-1}{2},\frac{t}{2};1;-c^{2}\right) -F\left( \frac{t}{2},\frac{%
t+1}{2};1;-c^{2}\right) =\frac{tc^{2}}{2}F\left( \frac{t+1}{2},\frac{t+2}{2}%
;2;-c^{2}\right) .
\end{equation*}%
Therefore,
\begin{multline*}
F\left( \frac{t}{2},\frac{t}{2}+\frac{1}{2};1;-c^{2}\right) -F\left( \frac{%
t-1}{2},\frac{t-1}{2}+\frac{1}{2};1;-c^{2}\right)
\\ =-c\frac{tc}{2}F\left( \frac{t+1}{2},\frac{t+1}{2}+\frac{1}{2}%
;2;-c^{2}\right) ,
\end{multline*}
which proves (ii) for all $t\geq 1$.

The identity in part (iii) can be written as%
\begin{equation} \label{eq. fla iii}
t\overline{J}_{n}^{c}\left( t+1\right) -\left( n+t\right) \overline{J}%
_{n}^{c}\left( t\right) +ct\overline{J}_{n+1}^{c}\left( t+1\right) =0.
\end{equation}
Using recursive relation (i) from Lemma \ref{GHFrecurs} with $\alpha =\frac{%
n+t+1}{2},\beta =\frac{n+t}{2},\gamma =n+1$ and $z=-c^{2} $, and the
symmetry of $_{2}F_{1}$ in the first two arguments, we obtain
\begin{multline*}
\left( n+1\right) F\left( \frac{n+t}{2} ,\frac{n+t}{2}+\frac{1}{2}
;n+1;-c^2\right) \\-\left( n+1\right) F\left( \frac{n+t+1}{2} ,\frac{n+t+1}{2}+\frac{1}{2}
;n+1;-c^2\right) \\
-\frac{n+t+1}{2}c^{2} F\left( \frac{n+t+2}{2} ,\frac{n+t+2}{2}+\frac{1}{2}
;n+2;-c^2\right)=0.
\end{multline*}
Multiplying the above display by $\frac{c^{n}}{2^{n}\left( n+1\right) !}\left( t\right) _{n+1}$%
, and using the recurrent relations $\left( t\right) _{n+1}=t\left( t+1\right) _{n}$, $\left(
t\right) _{n+1}=\left( n+t\right) \left( t\right) _{n}$, and
$\left( n+t+1\right) \left( t\right) _{n+1}=t\left( t+1\right) _{n+1}$, we deduce \eqref{eq. fla iii}.
\vskip .06in
The proof of part (iv) is analogous to the proof of (iii); it follows from a simple manipulation of the relation (iii) given in Lemma \ref{GHFrecurs} with $\alpha =\frac{n+t}{2},\beta =\frac{n+t}{2}+\frac{1}{2}%
,\gamma =n$ and $z=-c^{2}$.


\vskip .06in
Formula (v) is deduced by adding formulas (iii) and (iv), dividing the result by $2t$, and replacing $t$ by $t-1$.
\end{proof}
\end{lemma}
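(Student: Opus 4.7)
The strategy I would adopt is to reduce each of (ii)--(v) to an algebraic identity for the Gauss hypergeometric function $F(\alpha,\beta;\gamma;z)$ that follows from one of the four recursions listed in Lemma \ref{GHFrecurs}. In every case the parameters are of the form $\alpha,\beta\in\{(n+t+\ast)/2\}$, $\gamma\in\{n,n+1\}$, and $z=-c^2$, so the substance of the argument is a careful accounting of the Pochhammer prefactor $(c/2)^n(t)_n/n!$ appearing in \eqref{newcJBessel}.

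Statement (i) is immediate since $(0)_0=1$ and $F(0,1/2;1;-c^2)=1$. For (ii), one observes $(t)_0=1$, so unpacking the backward difference of $\overline{J}_0^c$ reduces the claim to
\[
F\!\left(\tfrac{t}{2},\tfrac{t+1}{2};1;-c^2\right) - F\!\left(\tfrac{t-1}{2},\tfrac{t}{2};1;-c^2\right) = -\tfrac{c^2 t}{2}\,F\!\left(\tfrac{t+1}{2},\tfrac{t+2}{2};2;-c^2\right),
\]
which follows by specializing Lemma \ref{GHFrecurs}(ii) to $\alpha=(t-1)/2$, $\beta=t/2$, $\gamma=1$, $z=-c^2$ and using the symmetry $F(\alpha,\beta;\gamma;z)=F(\beta,\alpha;\gamma;z)$.

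The core work lies in (iii) and (iv). I would first recast (iii) as the three-term relation $t\overline{J}_n^c(t+1)-(n+t)\overline{J}_n^c(t)+ct\overline{J}_{n+1}^c(t+1)=0$, substitute the definition \eqref{newcJBessel}, and clear the Pochhammer factors using $(t+1)_n=(t)_{n+1}/t$, $(n+t)(t)_n=(t)_{n+1}$, and $(n+t+1)(t)_{n+1}=t(t+1)_{n+1}$; the residual hypergeometric identity is exactly Lemma \ref{GHFrecurs}(i) at $\alpha=(n+t+1)/2$, $\beta=(n+t)/2$, $\gamma=n+1$, $z=-c^2$. Part (iv) is structurally parallel: rewriting it as $t\overline{J}_n^c(t+1)-(t-n)\overline{J}_n^c(t)-ct\overline{J}_{n-1}^c(t+1)=0$ and carrying out the same factoring reduces it to Lemma \ref{GHFrecurs}(iii) with $\alpha=(n+t)/2$, $\beta=(n+t+1)/2$, $\gamma=n$. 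The main obstacle I anticipate is the bookkeeping: three terms carrying different values of $t$, different values of $n$, and different $\gamma$-parameters must be brought to a common prefactor without residual mismatches, which forces a precise multiplication by $(c/2)^{-n}n!/(t)_n$ (or $(t+1)_{n-1}/(n-1)!$ in the case of (iv)) before the hypergeometric recursion becomes visible.

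Finally, (v) follows cleanly from (iii) and (iv): adding the two identities cancels the $\pm n\overline{J}_n^c(t)$ contributions and doubles the left-hand side, yielding
\[
2t\,\overline{\partial}_t\overline{J}_n^c(t+1) = ct\bigl(\overline{J}_{n-1}^c(t+1)-\overline{J}_{n+1}^c(t+1)\bigr),
\]
so dividing by $2t$ and substituting $t\mapsto t-1$ delivers the identity for every $t\geq 1$.
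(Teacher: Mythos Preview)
Your proposal is correct and follows essentially the same route as the paper: the same contiguous relations from Lemma~\ref{GHFrecurs} with the same parameter specializations are invoked for (ii)--(iv), the same Pochhammer identities $(t)_{n+1}=t(t+1)_n=(n+t)(t)_n$ and $(n+t+1)(t)_{n+1}=t(t+1)_{n+1}$ are used to align the prefactors, and (v) is obtained by the same addition-and-shift argument. The only cosmetic difference is that for (iv) you write $\beta=(n+t+1)/2$ where the paper writes $\beta=(n+t)/2+1/2$.
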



We will now explore asymptotic behaviour of $\overline{J}_{n}^{c}\left(
t\right) $ and $\overline{I}_{n}^{c}\left( t\right) $ when $c$ is real-valued, and as $n\rightarrow
\infty $.

\begin{proposition}
\label{asympJn}\hfill

\begin{enumerate}[label=\normalfont(\roman*)]

\item For any real nonzero parameter $c$ and a fixed $t\in \mathbb{N}$, we
have that%
\begin{equation}
\overline{J}_{n}^{c}\left( t\right) \sim \left( \mathrm{sgn}(c)\right) ^{n}%
\frac{n^{t-1}}{\left( \frac{1+\sqrt{1+c^{2}}}{\left\vert c\right\vert }%
\right) ^{n}}\frac{(1+c^2)^{-t/2}}{\Gamma \left( t\right) },
\text{ as }n\rightarrow \infty .  \label{nasympJ}
\end{equation}

\item For any real nonzero parameter $c$ such that $\left\vert c\right\vert <1$ and a
fixed $t\in \mathbb{N}$, we have that%
\begin{equation}
\overline{I}_{n}^{c}\left( t\right) \sim \left( \mathrm{sgn}(c)\right) ^{n}%
\frac{n^{t-1}}{\left( \frac{1+\sqrt{1-c^{2}}}{\left\vert c\right\vert }%
\right) ^{n}}\frac{(1-c^2)^{-t/2}}{\Gamma \left( t\right) },
\text{ as }n\rightarrow \infty .  \label{nasympI}
\end{equation}
\end{enumerate}

\begin{proof}
i) We will use the asymptotic formula \eqref{eq: Watson fla 1}
with $\lambda =\frac{n}{2}$, $\alpha =\beta =\frac{t}{2}$, $\gamma =\frac{1}{%
2}$, $z=1+\frac{2}{c^{2}}>1$ and
$z+\sqrt{z^{2}-1}=e^{\zeta }=\frac{\left( 1+\sqrt{1+c^{2}}\right) ^{2}}{
c^{2}}.$
This, together with the asymptotic formula \cite[formula (6.1.39)]{AS} for the gamma function
\begin{equation}
\Gamma \left( aw+b\right) \sim \sqrt{2\pi }e^{-aw}\left( aw\right) ^{aw+b-%
\frac{1}{2}},\text{ \ }\left( \left\vert \arg w\right\vert <\pi ,a>0\right) ,\quad |w|\to \infty
\label{gammaasymp}
\end{equation}
and the fact that the series $\sum\limits_{s=1}^{\infty }c_{s}^{\prime }%
\frac{\Gamma \left( s+\frac{1}{2}\right) }{n^{s}}$ is convergent yields the following asymptotics

\begin{eqnarray*}
\overline{J}_{n}^{c}\left( t\right)
&\sim &\frac{\left( \mathrm{sgn}(c)\right) ^{n}2^{t}}{2^{n}\Gamma \left(
t\right) {\left\vert c\right\vert }^{n+t}}\frac{\sqrt{2\pi }e^{-n}n^{n+t-%
\frac{1}{2}}}{2\pi e^{-n}\left( \frac{n}{2}\right) ^{n}}\left( \frac{c^{2}}{%
\left( 1+\sqrt{1+c^{2}}\right) ^{2}}\right) ^{\frac{n+t}{2}}\\
&& \times \left( 1+\frac{%
c^{2}}{\left( 1+\sqrt{1+c^{2}}\right) ^{2}}\right) ^{-t}\cdot \frac{\sqrt{2\pi }}{%
\sqrt{n}}\left( 1+O\left( \frac{1}{n}\right) \right)  \\
&=&\left( \mathrm{sgn}(c)\right) ^{n}\frac{n^{t-1}}{\left( \frac{1+\sqrt{%
1+c^{2}}}{\left\vert c\right\vert }\right) ^{n}}\frac{1}{\Gamma \left(
t\right) }\left( \frac{2\left( 1+\sqrt{1+c^{2}}\right) }{\left( 1+\sqrt{%
1+c^{2}}\right) ^{2}+c^{2}}\right) ^{t}\left( 1+O\left( \frac{1}{n}\right)
\right),
\end{eqnarray*}
as $n\rightarrow \infty$. This proves \eqref{nasympJ}.

ii) Assume $c\in \mathbb{R}$ such that $0<\left\vert c\right\vert <1$.
We apply equation \eqref{eq. hypergom transf} with $\alpha =\frac{n+t}{2}$, $\beta =\frac{n+t}{2}+\frac{1}{2}$, $\gamma =n+1$ and $z=c^{2}$ to deduce
\begin{equation*}
F\left( \frac{n+t}{2},\frac{n+t}{2}+\frac{1}{2};n+1;c^{2}\right) =\left(
1-c^{2}\right) ^{-\frac{n+t}{2}}F\left( \frac{n+t}{2},\frac{n-t}{2}+\frac{1}{%
2};n+1;\frac{c^{2}}{c^{2}-1}\right),
\end{equation*}
hence
\begin{equation}\label{eq. I bar repr}
\overline{I}_{n}^{c}\left( t\right) =\frac{\left( c/2\right) ^{n}\Gamma(t+n)}{ n! \Gamma(t)}\left(
1-c^{2}\right) ^{-\frac{n+t}{2}}F\left( \frac{n+t}{2},\frac{n-t}{2}+\frac{1}{%
2};n+1;\frac{c^{2}}{c^{2}-1}\right).
\end{equation}
We can write $\frac{c^{2}}{c^{2}-1}=\frac{2}{1-z}$ for $z=\frac{2-c^{2}}{%
c^{2}}>1$. Hence,
$$z+\sqrt{z^{2}-1}=e^{\zeta }=\frac{\left( 1+\sqrt{%
1-c^{2}}\right) ^{2}}{c^{2}},$$
for $\zeta >0$.

Now, we proceed in the same ways as above, i.e., apply the asymptotic formula  \eqref{eq: Watson fla 1} with $\lambda =\frac{n}{2}$, $\alpha =\beta =\frac{t}{2}$, $\gamma =t + \frac{1}{2}$ and use the asymptotic behaviour \eqref{gammaasymp} of the gamma function to derive  \eqref{nasympI} for $0<\left\vert
c\right\vert <1$.
\end{proof}
\end{proposition}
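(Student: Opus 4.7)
The strategy is to apply the Watson-type asymptotic formula \eqref{eq: Watson fla 1} directly to the hypergeometric factor in the definition \eqref{newcJBessel} of $\overline{J}_n^c(t)$, with the choice $\lambda = n/2$, and then clean up using Stirling's formula for the Gamma functions that appear.

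For part (i), matching $F\bigl(\tfrac{n+t}{2},\tfrac{n+t}{2}+\tfrac12;n+1;-c^2\bigr)$ against \eqref{eq: Watson fla 1} forces $\alpha=\beta=t/2$, $\gamma=1/2$, and $z=1+2/c^2>1$. A short algebraic computation then gives $e^{\zeta}=(1+\sqrt{1+c^2})^2/c^2$ and $1+e^{-\zeta}=2\sqrt{1+c^2}/(1+\sqrt{1+c^2})$, while the factor $(1-e^{-\zeta})^{1/2-\gamma}$ drops out because $\gamma=1/2$. Substituting these into \eqref{eq: Watson fla 1} expresses the hypergeometric function in terms of the ratio $\Gamma(n+1)/\bigl[\Gamma(\tfrac{n+t+1}{2})\Gamma(\tfrac{n-t+1}{2})\bigr]$. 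Multiplying by the prefactor $(c/2)^n(t)_n/n! = (c/2)^n\Gamma(t+n)/(n!\,\Gamma(t))$ and applying Stirling's formula to each Gamma term, the combination $\Gamma(t+n)/\bigl[\Gamma(\tfrac{n+t+1}{2})\Gamma(\tfrac{n-t+1}{2})\bigr]$ simplifies to $2^n n^{t-1/2}/\sqrt{2\pi}$ up to a factor $1+o(1)$. The factors of $2^n$ cancel, the leading term $\sqrt{2\pi/n}$ of the asymptotic series in \eqref{eq: Watson fla 1} combines with $n^{t-1/2}/\sqrt{2\pi}$ to produce the clean factor $n^{t-1}/\Gamma(t)$, and the sign $(\mathrm{sgn}(c))^n$ arises from writing $c^n=(\mathrm{sgn}(c))^n|c|^n$.

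For part (ii), the obstacle is that the hypergeometric argument $+c^2\in(0,1)$ does not fit \eqref{eq: Watson fla 1} directly, so I would first apply Pfaff's transformation \eqref{eq. hypergom transf} with $\alpha=(n+t)/2$, $\beta=(n+t)/2+1/2$, $\gamma=n+1$, $z=c^2$ to convert the function to $(1-c^2)^{-(n+t)/2}F\bigl(\tfrac{n+t}{2},\tfrac{n-t+1}{2};n+1;\tfrac{c^2}{c^2-1}\bigr)$. The new argument equals $2/(1-\tilde z)$ with $\tilde z=(2-c^2)/c^2>1$, which is precisely where the hypothesis $|c|<1$ enters; it guarantees $\tilde z>1$ so that \eqref{eq: Watson fla 1} is applicable. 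Matching then gives $\lambda=n/2$, $\alpha=\beta=t/2$, but now $\gamma=t+1/2$; computing $e^{\zeta}=(1+\sqrt{1-c^2})^2/c^2$ and running the same Stirling cancellation as in part (i) produces \eqref{nasympI}.

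The main technical obstacle is the careful matching of parameters in \eqref{eq: Watson fla 1} and the bookkeeping of the many factors of $2^n$, $|c|^n$, Gamma-function ratios, and powers of $1+\sqrt{1\pm c^2}$; each step is routine, but a single sign or power error would corrupt the final expression. In particular, one must verify that retaining only the leading $\Gamma(1/2)/\sqrt{\lambda}$ term from the asymptotic series delivers exactly the $1/\sqrt{n}$ factor required to match the target exponent $n^{t-1}$, with the remaining terms contributing at order $O(1/n)$ relative to the leading expression.
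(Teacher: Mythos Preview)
Your proposal is correct and follows essentially the same route as the paper: both parts apply Watson's formula \eqref{eq: Watson fla 1} with $\lambda=n/2$, $\alpha=\beta=t/2$, and the parameter choices $\gamma=1/2$ (part (i)) and $\gamma=t+1/2$ (part (ii)), the latter after the same Pfaff transformation \eqref{eq. hypergom transf}, and then simplify via Stirling. Your remarks on why $|c|<1$ is needed and on the cancellation of the $2^n$ and $\sqrt{n}$ factors match the paper's computation exactly.
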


Since $\frac{1+\sqrt{1+c^{2}}}{\left\vert c\right\vert
}>1$ for all real nonzero $c$ and $\frac{1+\sqrt{1-c^{2}}}{\left\vert c\right\vert} >1$ for all real nonzero $c$ such that $\left\vert c\right\vert <1$, it is obvious that the right-hand sides of equations \eqref{nasympJ} and \eqref{nasympI}, for a fixed positive integer $t$,  decay exponentially as $n\to \infty$. This proves the following corollary.
\begin{corollary}
\hfill
\begin{enumerate}[label=\normalfont(\roman*)]
 \item For  any real nonzero parameter $c$ and a fixed $t\in \mathbb{N}$, we
have that $\overline{J}_{n}^{c}\left( t\right) \rightarrow 0$ as $
n\rightarrow \infty $.
  \item For any real nonzero parameter $c$, $\left\vert c\right\vert <1$ and a
fixed $t\in \mathbb{N}$, we have that $\overline{I}_{n}^{c}\left( t\right) \rightarrow 0$ as $%
n\rightarrow \infty $.
\end{enumerate}
  \end{corollary}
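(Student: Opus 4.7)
The plan is to read the conclusion off directly from the two asymptotic formulas \eqref{nasympJ} and \eqref{nasympI} in Proposition \ref{asympJn}; the corollary is essentially the observation that a polynomial in $n$ is dominated by a true exponential in $n$ whose base exceeds $1$.

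First I would fix a real nonzero $c$ and check that the base appearing in \eqref{nasympJ} satisfies
\[
r_J(c) := \frac{1+\sqrt{1+c^2}}{|c|} > 1.
\]
This is equivalent to $1+\sqrt{1+c^2} > |c|$, which is immediate since $\sqrt{1+c^2} > \sqrt{c^2} = |c|$. Therefore, for a fixed positive integer $t$, Proposition \ref{asympJn}(i) yields
\[
\bigl|\overline{J}_n^c(t)\bigr| \sim \frac{n^{t-1}}{r_J(c)^n}\cdot \frac{(1+c^2)^{-t/2}}{\Gamma(t)},\qquad n\to\infty,
\]
and since $n^{t-1}/r_J(c)^n\to 0$ as $n\to\infty$ (polynomial vs.\ geometric decay with ratio $<1$), we conclude $\overline{J}_n^c(t)\to 0$.

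For part (ii), with $0<|c|<1$, the base in \eqref{nasympI} is
\[
r_I(c) := \frac{1+\sqrt{1-c^2}}{|c|},
\]
and $r_I(c)>1$ is equivalent to $1+\sqrt{1-c^2}>|c|$, which holds because $1>|c|$ and $\sqrt{1-c^2}\geq 0$. Proposition \ref{asympJn}(ii) then gives
\[
\bigl|\overline{I}_n^c(t)\bigr| \sim \frac{n^{t-1}}{r_I(c)^n}\cdot \frac{(1-c^2)^{-t/2}}{\Gamma(t)},\qquad n\to\infty,
\]
and the same polynomial-versus-geometric argument forces $\overline{I}_n^c(t)\to 0$.

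There is essentially no obstacle here: the entire content of the corollary is the inequalities $r_J(c)>1$ and $r_I(c)>1$, both of which are elementary. The only minor point worth recording explicitly is that the hypothesis $|c|<1$ in part (ii) is precisely what is needed to keep $\sqrt{1-c^2}$ real and to force $r_I(c)>1$; without it the geometric factor could be $\le 1$ and the limit need not vanish.
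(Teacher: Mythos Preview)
Your proof is correct and follows exactly the paper's approach: the paper likewise observes that $\frac{1+\sqrt{1+c^2}}{|c|}>1$ for all real nonzero $c$ and $\frac{1+\sqrt{1-c^2}}{|c|}>1$ for $0<|c|<1$, so the right-hand sides of \eqref{nasympJ} and \eqref{nasympI} decay exponentially in $n$. Your write-up merely makes the elementary verification of these inequalities explicit.
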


\vspace{1.5cc}
\section{Proofs of main results}\label{main_section}

In this section, we will prove our main results. First, we will show that our
backward discrete Bessel functions satisfy the corresponding backward difference
equations.

\setcounter{subsection}{0}
\subsection{Proof of Theorem \protect\ref{thm: difference eq.}}

Consider the backward difference equation \eqref{backBesselEq1}. By
expanding the differences, one can obtain the following equivalent form of the equation
\eqref{backBesselEq1} with the $+$ sign
\begin{equation}
t\left( t+1\right) (1+c^{2})y_{n}\left( t+2\right) -t\left( 2t+1\right)
y_{n}\left( t+1\right) -\left( n^{2}-t^{2}\right) y_{n}\left( t\right) =0.
\label{backBesselEq2}
\end{equation}

Since $\overline{I}_{n}^{c}(t) = \left(-i\right)^n \overline{J}_{n}^{ic} (t)$%
, it is enough to prove that $\overline{J}_{n}^{c}\left( t\right) $
satisfies the difference equation \eqref{backBesselEq2}.

Using relation (iv) from Lemma \ref{GHFrecurs} with $\alpha =\frac{n+t}{2}%
+1,\beta =\frac{n+t}{2}+\frac{1}{2},\gamma =n+1$ and $z=-c^{2}$, we get%
\begin{multline*}
-\left( t+\frac{1}{2}\right) F\left( \frac{n+t}{2}+1,\frac{n+t}{2}+\frac{1}{2%
};n+1;-c^{2}\right) \\
-\frac{n-t}{2}F\left( \frac{n+t}{2},\frac{n+t}{2}+\frac{1}{2}%
;n+1;-c^{2}\right) \\
+\frac{n+t+1}{2}(1+c^{2})F\left( \frac{n+t+2}{2},\frac{n+t+2}{2}+\frac{1}{2}%
;n+1;-c^{2}\right) =0.
\end{multline*}

Multiplying by $\frac{2c^{n}}{2^{n}n!}\left( t\right) _{n+1}$ and using the
definition of $\overline{J}_{n}^{c}\left( t\right) $, we obtain%
\begin{equation*}
-t\left( 2t+1\right) \overline{J}_{n}^{c}\left( t+1\right) -\left(
n^{2}-t^{2}\right) \overline{J}_{n}^{c}\left( t\right) +t\left( t+1\right)
(1+c^{2})\overline{J}_{n}^{c}\left( t+2\right) =0,
\end{equation*}%
which is \eqref{backBesselEq2}.

\subsection{Proof of Theorem \protect\ref{th_asymp}}

First, we will prove the asymptotic formula \eqref{asymp1}. Applying the identity \eqref{eq. hypergeom = Leg} with $a=(n-t)/2$, $c=n+1$, and $x=-c^2<0$ we get
\begin{equation}\label{eq. F as P}
F\left( \frac{n-t}{2},\frac{n-t}{2}+\frac{1}{2};n+1;-c^{2}\right) =2^{n}n!\frac{\left( 1+c^{2}\right) ^{\frac{t}{2}}}{\left\vert c\right\vert ^{n}}%
P_{-t-1}^{-n}\left( \left( 1+c^{2}\right) ^{-\frac{1}{2}}\right).
\end{equation}%
We put $\cos \theta =\left( 1+c^{2}\right) ^{-\frac{1}{2}}\in \left(
0,1\right) $ and use \eqref{eq. Leg at negatives} with $\nu=t$ and $m=n$ to deduce
\begin{equation}\label{P neg as P}
P_{-t-1}^{-n}\left( \cos \theta \right) =P_{t}^{-n}\left( \cos \theta
\right) =\frac{\Gamma \left( t-n+1\right) }{\Gamma \left( t+n+1\right) }%
P_{t}^{n}\left( \cos \theta \right).
\end{equation}
Combining \eqref{eq. F as P} and \eqref{P neg as P} with the asymptotic formula \eqref{eq. Leg asymptotics} (in which we take $\nu=t$ and $\mu=n$) we arrive at
$$
F\left( \frac{n-t}{2},\frac{n-t}{2}+\frac{1}{2};n+1;-c^{2}\right)\sim \frac{2^{n+1}n!\left( 1+c^{2}\right) ^{\frac{t}{2}}}{\left\vert c\right\vert ^{n}\sqrt{\pi}}\frac{\Gamma
\left( t-n+1\right) }{\Gamma \left( t+\frac{3}{2}\right) }\frac{\cos \left(
\left( t+\frac{1}{2}\right) \theta -\frac{\pi }{4}+\frac{n\pi }{2}\right) }{%
\sqrt{2\sin \theta }},
$$
as $t\to\infty$.

Since $\sin \theta =\frac{\left\vert c\right\vert }{\sqrt{1+c^{2}}}$, we deduce the following asymptotic formula for $J_{n}^{c}\left( t\right) $ as $t\to\infty$:
\begin{multline}\label{eq. J asympt}
J_{n}^{c}\left( t\right) =\frac{\left( -c/2\right) ^{n}\left( -t\right)
_{n}}{n!}F\left( \frac{n-t}{2},\frac{n-t}{2}+\frac{1}{2};n+1;-c^{2}\right) \\
\sim \frac{\sqrt{2}}{\sqrt{\pi }}\left( -1\right) ^{n}\left( \mathrm{sgn}%
c\right) ^{n}\frac{\left( -t\right) _{n}\Gamma \left( t-n+1\right) }{\Gamma
\left( t+\frac{3}{2}\right) }\frac{\left( 1+c^{2}\right) ^{\frac{t}{2}+\frac{%
1}{4}}}{\sqrt{\left\vert c\right\vert }}\cos \left( \left( t+\frac{1}{2}%
\right) \theta -\frac{\pi }{4}+\frac{n\pi }{2}\right).
\end{multline}

Using the asymptotic formula \eqref{gammaasymp} for the gamma function,  we get
$$
(-1)^n \frac{\left( -t\right) _{n}\Gamma \left( t-n+1\right) }{\Gamma
\left( t+\frac{3}{2}\right) }=\frac{\Gamma(t+n) \Gamma(t-n+1)}{\Gamma(t)\Gamma(t+3/2)}\sim \frac{1}{\sqrt{t}}, \text{  as  } t\to \infty.
$$
Inserting this into \eqref{eq. J asympt} we arrive at \eqref{asymp1}.\medskip



Now, we prove \eqref{asymp2}. Applying the identity \eqref{eq. hypergeom = Leg} with $a=(n+t)/2$, $c=n+1$, and $x=-c^2<0$ together with the identity \eqref{eq. Leg at negatives} we get
\begin{equation*}
F\left( \frac{n+t}{2} ,\frac{n+t}{2}+\frac{1}{2} ;n+1 ;-c^2 \right)
= 2^{n}n!\frac{\left( 1+c^{2}\right) ^{-\frac{t}{2}}}{\left\vert
c\right\vert ^{n}}\frac{\Gamma \left( t-n\right) }{\Gamma \left( t+n\right) }%
P_{t-1}^{n}\left( \left( 1+c^{2}\right) ^{-\frac{1}{2}}\right) .
\end{equation*}

The asymptotic formula \eqref{eq. Leg asymptotics} with $\nu=t-1$ and $\mu=n$ yields
\begin{eqnarray*}
\overline{J}_{n}^{c}\left( t\right) &=&\frac{\left( c/2\right) ^{n}\left( t\right)
_{n}}{n!}F\left( \frac{n+t}{2},\frac{n+t}{2}+\frac{1}{2};n+1;-c^{2}\right) \\
&\sim &\frac{\sqrt{2}}{\sqrt{\pi }}\left( \mathrm{sgn}%
c\right) ^{n}\frac{\Gamma(t)}{\Gamma
\left( t+\frac{1}{2}\right) }\frac{\left( 1+c^{2}\right) ^{-\frac{t}{2}+\frac{1}{4}}}{\sqrt{%
\left\vert c\right\vert }}\cos \left( \left( t-\frac{1}{2}\right) \theta -%
\frac{\pi }{4}+\frac{n\pi }{2}\right),
\end{eqnarray*}
as $t\to\infty$. Applying the asymptotic formula \eqref{gammaasymp} to the quotient $\frac{\Gamma(t)}{\Gamma
\left( t+\frac{1}{2}\right) }$ in the above display immediately yields \eqref{asymp2}.

\medskip

It remains to prove  \eqref{asymp3}. Assume $c\in\mathbb{R}$, $|c|<1$. Our starting point is equation \eqref{eq. I bar repr}, in which we would like to deduce the asymptotics for the hypergeometric function as $t\to\infty$. In order to do so, we write $\frac{c^{2}}{c^{2}-1}=\frac{1}{2}\left( 1-z\right) $ for $z=\frac{1+c^{2}}{1-c^{2}}>1$. Hence
$$z+\sqrt{z^{2}-1}=e^{\zeta }=\frac{%
1+\left\vert c\right\vert }{1-\left\vert c\right\vert },
$$
for $\zeta >0$, which justifies application of the second asymptotic Watson's formula \eqref{eq: Watson fla 2} with $\alpha =\frac{n}{2}$, $\beta =\frac{n+1}{2}$, $\gamma =n+1$, and $\lambda=\frac{t}{2}$. In this case $e^{-\zeta}<1$, hence the second term in the sum in \eqref{eq: Watson fla 2} decays exponentially as $\lambda=t/2\to\infty$, meaning that the first sum gives the lead term in asymptotics. We get
\begin{multline*} \label{eq. hyperg asympt}
F\left( \frac{n+t}{2},\frac{n-t}{2}+\frac{1}{2};n+1;\frac{c^{2}}{c^{2}-1}\right)\\ \sim  \frac{n! \Gamma\left( \frac{t-n+1}{2}\right)}{\pi \Gamma\left( \frac{t+n+1}{2}\right)}2^{n-1/2}\left(\frac{1+|c|}{2|c|}\right)^{n+1/2}\left(\frac{1+|c|}{1-|c|}\right)^{(t-n-1)/2}
\left(\frac{\sqrt{\pi}}{\sqrt{t/2}}+O\left(\frac{1}{t}\right)\right),
\end{multline*}
as $t\to\infty$. Inserting this into the formula \eqref{eq. I bar repr} we deduce
\begin{equation*}
\overline{I}_{n}^{c}\left( t\right) \sim \left( \mathrm{sgn}(c)\right) ^{n}\frac{2^{-n}\Gamma \left(
t+n\right) \Gamma \left( \frac{t-n+1}{2}\right) }{\Gamma \left( t\right)
\Gamma \left( \frac{t+n+1}{2}\right) \sqrt{2\pi t\left\vert c\right\vert }}%
\left( 1-\left\vert c\right\vert \right) ^{-t+\frac{1}{2}},\text{as }%
t\rightarrow \infty .
\end{equation*}

Formula \eqref{gammaasymp} yields that $\frac{\Gamma \left(
t+n\right) \Gamma \left( \frac{t-n+1}{2}\right) }{\Gamma \left( t\right)
\Gamma \left( \frac{t+n+1}{2}\right) }\sim 2^{n}$ as $t\rightarrow \infty $,
which completes the proof of \eqref{asymp3}.

\subsection{Proof of Theorem \protect\ref{thm: Laplace tr}}

Let $c\in \mathbb{C}\setminus \left\{ 0\right\} $ and $n\in \mathbb{N}_{0}$. We begin by recalling the result of \cite{CHJSV} which states that for any $n\in\mathbb{N}$ the generating function
$$
g_n^c(z) := \sum\limits_{t=0}^{\infty }I_{n}^{c}\left( t\right) z^{t}
$$
of the sequence $\left\{ I_{n}^{c}\left( t\right) \right\} _{n\in
\mathbb{N}}$ is holomorphic in the disc $|z|<\frac{1}{1+|c|}$ and possesses meromorphic continuation to the whole $z$-plane given by
$$
g_n^c(z)= \frac{1}{\sqrt{(1-z)^{2}-c^{2}z^{2}}}\left(
\frac{(1-z) -\sqrt{\left( 1-z\right) ^{2}-c^{2}z^{2}}}{cz}\right) ^{n}.
$$
According to the asymptotic relation \eqref{eq. I asympt as t to infty}, the Laplace transform  $\mathcal{L}_{\partial _{t}}$ of the sequence  $\left\{ I_{n}^{c}\left( t\right) \right\} _{n\in\mathbb{N}}$ is a holomorphic function in the region $|1+z|> 1+|c|$ and
$$
\mathcal{L}_{\partial _{t}}\{I_{n}^{c}\}(z)=\sum_{t=0}^{\infty }\frac{%
I_{n}^{c}\left( t\right) }{(1+z)^{t+1}}=\frac{1}{1+z}g_{n}^{c}\left( \frac{1}{1+z}\right)=\frac{c^{-n}\left(z- \sqrt{
z^{2}-c^{2}}\right) ^{n}}{\sqrt{z^{2}-c^{2}}}.
$$
The right-hand side provides the meromorphic continuation of $\mathcal{L}_{\partial _{t}}\{I_{n}^{c}\}(z)$ to all complex values of $z$ such that $z\neq \pm c$.

The obvious inequality $\left\vert J_{n}^{c}\left( t\right) \right\vert \leq
J_{n}^{\left\vert c\right\vert }\left( t\right) $ for all integers $n,t\geq
0 $, combined with the asymptotic formula \eqref{asymp1} implies that the radius of convergence of the
power series
\begin{equation}
\sum\limits_{t=0}^{\infty }J_{n}^{c}\left( t\right) z^{t}  \label{gen1}
\end{equation}%
equals $\frac{1}{\sqrt{1+\left\vert c\right\vert ^{2}}}$. Therefore, the
series \eqref{gen1} defines a holomorphic function $f_{n}^{c}\left(
z\right) $ in the disc $|z|<\frac{1}{\sqrt{1+\left\vert c\right\vert ^{2}}}$ which is the generating function  of the sequence $\left\{ J_{n}^{c}\left( t\right) \right\} _{n\in
\mathbb{N}}$.

The identity $I_{n}^{c}\left(
t\right) =\left( -i\right) ^{n}J_{n}^{ic}\left( t\right) $ which is valid for all nonzero complex $c$, when $n,t \geq 0$ yields $J_{n}^{c}\left( t\right) =i^{n}I_{n}^{-ic}\left( t\right) $. Therefore, $f_n^c(z)= i^n g_n^{-ic}(z)$, for all $z$ in the disc $|z|<\frac{1}{1+|c|} \leq \frac{1}{\sqrt{1+\left\vert c\right\vert ^{2}}}$.
A simple computation that amounts to algebraic manipulations and choosing the principal branch of the square root yields that
\begin{equation}
f_{n}^{c}\left( z\right) =\frac{1}{\sqrt{(z-1)^{2}+c^{2}z^{2}}}\left( \frac{%
z-1}{cz}+\sqrt{\frac{\left( z-1\right) ^{2}}{c^{2}z^{2}}+1}\right) ^{n},
\label{genfunc}
\end{equation}%
for any $c\in \mathbb{C}\setminus \left\{ 0\right\} $, $n\in
\mathbb{N}_{0}$ and $\left\vert z\right\vert <\frac{1}{\sqrt{1+\left\vert
c\right\vert ^{2}}}$. The right-hand side of \eqref{genfunc} provides the meromorphic continuation of $f_{n}^{c}\left( z\right)$ to the whole $z$-plane.

The asymptotic formula \eqref{asymp1} ensures that the Laplace transform $\mathcal{L}_{\partial _{t}}$ of the sequence  $\left\{ J_{n}^{c}\left( t\right) \right\} _{n\in\mathbb{N}}$ as given in \eqref{LaplaceFW} is well defined and holomorphic function in the region $|1+z|>\sqrt{1+|c|^2}$ and moreover
\begin{equation*}
\mathcal{L}_{\partial _{t}}\{J_{n}^{c}\}(z)=\sum_{t=0}^{\infty }\frac{%
J_{n}^{c}\left( t\right) }{(1+z)^{t+1}}=\frac{1}{1+z}f_{n}^{c}\left( \frac{1%
}{1+z}\right) =\frac{c^{-n}\left( \sqrt{z^{2}+c^{2}}-z\right) ^{n}}{\sqrt{%
z^{2}+c^{2}}}.
\end{equation*}%
The right-hand side provides the holomorphic continuation of $\mathcal{L}_{\partial _{t}}\{J_{n}^{c}\}(z)$ to all complex values of $z$ such that $z\neq \pm ic$.

This proves the claim of the theorem for the Laplace transform $\mathcal{L}_{\partial _{t}}$ associated to the forward difference operator. The Laplace transform for the backward difference operator is evaluated analogously.

Namely, assume $c\in\mathbb{C} \setminus\{\alpha:\, \alpha\in\mathbb{R}, |\alpha|\geq 1\}$. In \textrm{\cite[formula (3.28)]{KS22}} Kan and Shiraishi computed the generating function for the sequence $\left\{ \overline{I}_{n}^{c}\left( t\right) \right\} _{n\in \mathbb{N}}$; its meromorphic continuation to the whole $z$-plane is given for $n\geq 0$ by
$$
\overline{g}_n^c(z)= \frac{z}{\sqrt{(1-z)^{2}-c^{2}}}\left(
\frac{(1-z)}{c} -\sqrt{\frac{\left( 1-z\right) ^{2}-c^{2}}{c^2}}\right) ^{n}.
$$
The definition \eqref{LaplaceBW} of the Laplace transform associated with the backward difference operator, combined with the asymptotic formula \eqref{asymp3}  yields that for complex values $c$ in the unit disc and for $z\in\mathbb{C}$ such that $|1-z| < 1-|c|$ we have
\begin{equation*}
\mathcal{L}_{\overline{\partial }_{t}}\{\overline{I}_{n}^{c}\}(z)=%
\sum_{t=0}^{\infty }\overline{I}_{n}^{c}\left( t\right) (1-z)^{t-1}=\frac{1}{%
1-z}\overline{g}_{n}^{c}(1-z)=\frac{c^{-n}\left(z- \sqrt{z^{2}-c^{2}}\right)
^{n}}{\sqrt{z^{2}-c^{2}}}.
\end{equation*}%
The right-hand side of the above equation provides the meromorphic continuation of $\mathcal{L}_{\overline{\partial }_{t}}\{\overline{I}_{n}^{c}\}(z)$ to all complex, nonzero $c$ and all $z\in\mathbb{C}$ with $z\neq \pm c$.

Computation of $\mathcal{L}_{\overline{\partial }_{t}}\{\overline{J}_{n}^{c}\}(z)$ is analogous, so we provide only the two key steps. The first step is computation of the generating function $\overline{f}_{n}^{c}\left( z\right)$ of the sequence $\left\{ \overline{J}_{n}^{c}\left( t\right) \right\} _{n\in \mathbb{N}}$ which follows by using the identity $\overline{J}_{n}^{c}\left( t\right) =i^{n}\overline{I}%
_{n}^{-ic}\left( t\right) $ to relate $\overline{f}_{n}^{c}\left( z\right)$  to $\overline{g}_{n}^{ic}\left( z\right)$ and deduce that
\begin{equation*}
\overline{f}_{n}^{c}\left( z\right) =\frac{z}{\sqrt{(z-1)^{2}+c^{2}}}\left(
\frac{z-1}{c}+\sqrt{\frac{(z-1)^{2}}{c^{2}}+1}\right) ^{n}.
\label{modgenfunc}
\end{equation*}
The second step is the evaluation of the Laplace transform, which follows from the observation that
$$
\mathcal{L}_{\overline{\partial }_{t}}\{\overline{J}_{n}^{c}\}(z)=\frac{1}{
1-z}\overline{f}_{n}^{c}(1-z)=\frac{c^{-n}\left( \sqrt{z^{2}+c^{2}}-z\right)
^{n}}{\sqrt{z^{2}+c^{2}}}.
$$

\vspace{1.5cc}
\begin{section}
{Backward discrete wave equation}
\end{section}\label{wave_section}

In this section, we will study the backward discrete wave equation
\begin{equation}
\overline{\partial }_{t}^{2}u\left( n;t\right) =c^{2}\left( u\left(
n+1;t\right) -2u\left( n;t\right) +u\left( n-1;t\right) \right) \text{, \ }%
n\in \mathbb{Z}\text{, \ }t\in \mathbb{N}_{0}\text{,}  \label{backWaveEq}
\end{equation}
which is the backward analogue of \eqref{discwaveq1} and find its fundamental and general solutions under natural initial conditions.
Then, we will study the asymptotic behaviour of the first fundamental solutions of both
forward and backward discrete wave equations when the time variable tends to infinity.

\setcounter{subsection}{0}
\subsection{Fundamental solutions to the backward discrete wave equation}

The first fundamental solution to the backward discrete wave equation is described in the following theorem.

\begin{theorem}\label{thm. fund sol}
Let $c>0$. The solution of the backward wave equation \eqref{backWaveEq}
with initial conditions
\begin{equation}
u\left( n;0\right) =\left\{
\begin{array}{ll}
1 & \text{if }n=0, \\
0 & \text{if }n\neq 0,%
\end{array}%
\right. ,\quad \overline{\partial }_{t}u\left( n;0\right) =0,\text{ \ }n\in
\mathbb{Z},  \label{icWaveEq}
\end{equation}
is given by
\begin{equation}
u\left( n;t\right) =\overline{J}_{2\left\vert n\right\vert }^{2c}\left(
t\right) \text{, \ }n\in \mathbb{Z}\text{, \ }t\in \mathbb{N}_{0}\text{.\ }
\label{solWaveEq}
\end{equation}

\begin{proof}
Let us define $u\left( n;t\right) $ by \eqref{solWaveEq}. It is easy to
verify that $\overline{J}_{2\left\vert n\right\vert }^{2c}\left( t\right)$
satisfies initial conditions \eqref{icWaveEq}. We will now check if $%
\overline{J}_{2\left\vert n\right\vert }^{2c}\left( t\right) $ satisfies %
\eqref{backWaveEq}. For $n\geq 1$, using Lemma \ref{lemma2} (v) we obtain%
\begin{eqnarray*}
\overline{\partial }_{t}u\left( n;t\right) &=&\overline{\partial }_{t}%
\overline{J}_{2n}^{2c}\left( t\right) =c\left( \overline{J}%
_{2n-1}^{2c}\left( t\right) -\overline{J}_{2n+1}^{2c}\left( t\right) \right)
, \\
\overline{\partial }_{t}^{2}u\left( n;t\right) &=&\overline{\partial }%
_{t}^{2}\overline{J}_{2n}^{2c}\left( t\right) =c^{2}\left( \overline{J}%
_{2n-2}^{2c}\left( t\right) -2\overline{J}_{2n}^{2c}\left( t\right) +%
\overline{J}_{2n+2}^{2c}\left( t\right) \right) \\
&=&c^{2}\left( u\left( n-1;t\right) -2u\left( n;t\right) +u\left(
n+1;t\right) \right) .
\end{eqnarray*}

Similarly, if $n\leq -1$, we obtain%
\begin{eqnarray*}
\overline{\partial }_{t}u\left( n;t\right) &=&\overline{\partial }_{t}%
\overline{J}_{-2n}^{2c}\left( t\right) =c\left( \overline{J}%
_{-2n-1}^{2c}\left( t\right) -\overline{J}_{-2n+1}^{2c}\left( t\right)
\right) , \\
\overline{\partial }_{t}^{2}u\left( n;t\right) &=&\overline{\partial }%
_{t}^{2}\overline{J}_{-2n}^{2c}\left( t\right) =c^{2}\left( \overline{J}%
_{-2n-2}^{2c}\left( t\right) -2\overline{J}_{-2n}^{2c}\left( t\right) +%
\overline{J}_{-2n+2}^{2c}\left( t\right) \right) \\
&=&c^{2}\left( u\left( n+1;t\right) -2u\left( n;t\right) +u\left(
n-1;t\right) \right) .
\end{eqnarray*}

For $n=0$, using Lemma \ref{lemma2} (ii) and (v), we obtain%
\begin{eqnarray*}
\overline{\partial }_{t}u\left( n;t\right) &=&\overline{\partial }_{t}%
\overline{J}_{0}^{2c}\left( t\right) =-2c\overline{J}_{1}^{2c}\left(
t\right) , \\
\overline{\partial }_{t}^{2}u\left( n;t\right) &=&\overline{\partial }%
_{t}^{2}\overline{J}_{0}^{2c}\left( t\right) =-2c\overline{\partial }_{t}%
\overline{J}_{1}^{2c}\left( t\right) =-2c^{2}\left( \overline{J}%
_{0}^{2c}\left( t\right) -\overline{J}_{2}^{2c}\left( t\right) \right) \\
&=&c^{2}\left( u\left( 1;t\right) -2u\left( 0;t\right) +u\left( -1;t\right)
\right) .
\end{eqnarray*}

Therefore, the equation \eqref{backWaveEq} holds for all $n\in \mathbb{Z}$
and $t\in \mathbb{N}_{0}$.
\end{proof}
\end{theorem}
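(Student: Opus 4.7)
The plan is to verify directly that the candidate $u(n;t) := \overline{J}_{2|n|}^{2c}(t)$ satisfies both the initial conditions \eqref{icWaveEq} and equation \eqref{backWaveEq}, then appeal to uniqueness of the Cauchy problem. Uniqueness comes for free by expanding $\overline{\partial}_{t}^{2}u(n;t) = u(n;t) - 2u(n;t-1) + u(n;t-2)$, which turns \eqref{backWaveEq} into a two-step forward recursion in $t$ whose first two values are determined by the data $u(n;0)$ and $\overline{\partial}_t u(n;0)$, so verification suffices.

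I would first dispatch the initial conditions. The vanishing of the Pochhammer symbol $(t)_n$ at $t=0$ for $n \geq 1$, together with $\overline{J}_{0}^{2c}(0) = 1$ from Lemma \ref{lemma2}(i), gives $u(n;0) = \delta_{n,0}$. The condition $\overline{\partial}_t u(n;0) = u(n;0) - u(n;-1) = 0$ then reduces to showing $\overline{J}_{2|n|}^{2c}(-1) = \delta_{n,0}$, which follows from the explicit polynomial formula in Proposition \ref{propJ}(ii) evaluated at $t = -1$: only the $n=0,\,k=0$ term survives.

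Next I would verify the equation for $n \geq 1$, where all relevant Bessel indices $2n$ and $2n \pm 2$ are $\geq 2$, so Lemma \ref{lemma2}(v) applies twice and yields
\[
\overline{\partial}_t^2 \overline{J}_{2n}^{2c}(t) = c\,\overline{\partial}_t\bigl(\overline{J}_{2n-1}^{2c}(t) - \overline{J}_{2n+1}^{2c}(t)\bigr) = c^2\bigl(\overline{J}_{2n-2}^{2c}(t) - 2\overline{J}_{2n}^{2c}(t) + \overline{J}_{2n+2}^{2c}(t)\bigr),
\]
which matches the right-hand side of \eqref{backWaveEq} since $|n \pm 1| = n \pm 1$ in this regime. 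The case $n \leq -1$ is identical via the substitution $m = -n$: the function $\overline{J}_{2|n|}^{2c}$ depends only on $|n|$, and $n \mapsto -n$ merely swaps the two neighbors in the discrete Laplacian, which is itself symmetric in them.

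The only essentially different case, and the main obstacle, is $n = 0$, where $|n \pm 1| = 1$ forces the outer application of Lemma \ref{lemma2}(v) to be inapplicable because the outer Bessel index is $0$. For this case I would instead invoke Lemma \ref{lemma2}(ii) at the outer step, $\overline{\partial}_t \overline{J}_0^{2c}(t) = -2c\,\overline{J}_1^{2c}(t)$, and then Lemma \ref{lemma2}(v) with index $1$ at the inner step, yielding $\overline{\partial}_t^2 \overline{J}_0^{2c}(t) = -2c^2(\overline{J}_0^{2c}(t) - \overline{J}_2^{2c}(t))$; this exactly equals $c^2(u(1;t) + u(-1;t) - 2u(0;t)) = 2c^2(\overline{J}_2^{2c}(t) - \overline{J}_0^{2c}(t))$. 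A minor technical point is that Lemma \ref{lemma2}(ii),(v) are stated for $t \geq 1$, but the boundary case $t = 0$ is already absorbed into the verified initial conditions, so no additional argument is needed.
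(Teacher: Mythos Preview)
Your proposal is correct and follows essentially the same route as the paper: verify the initial conditions directly, then use Lemma~\ref{lemma2}(v) twice for $|n|\ge 1$ and Lemma~\ref{lemma2}(ii) followed by (v) for $n=0$. Your only addition is the uniqueness remark, which the paper omits here; note, though, that the expanded equation is an \emph{implicit} time step (the right-hand side of \eqref{backWaveEq} is evaluated at the same $t$), so ``two-step forward recursion'' really means inverting $I + c^2\Delta_{\mathbb Z}$ at each step---uniqueness then holds in, say, $\ell^\infty(\mathbb Z)$, but is not as automatic as you suggest.
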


The second fundamental solution to the backward time discrete wave equation is the solution $u_2(x;t)$, $x\in\mathbb{Z}$, $t\in\mathbb{N}_0$ to \eqref{backWaveEq} satisfying the initial conditions
\begin{equation}\label{eq. init cond u2}
u_{2}\left( n;0\right) =0,\quad \overline{\partial }_{t}u_2\left( n;0\right)
=\left\{
\begin{array}{ll}
1 & \text{if }n=0, \\
0 & \text{if }n\neq 0,%
\end{array}%
\right. \text{ \ }n\in \mathbb{Z}.
\end{equation}
We have the following proposition.

\begin{proposition}
Let $c>0$. The solution of the backward wave equation \eqref{backWaveEq}
with initial conditions \eqref{eq. init cond u2} is given by
\begin{equation}\label{eq. u2 defn}
u_{2}\left( n;t\right) =\sum_{s=0}^{t}\overline{J}_{2\left\vert n\right\vert
}^{2c}\left( s\right) - \overline{J}_{2\left\vert n\right\vert
}^{2c}\left( -1\right),
\end{equation}
where we assume that $t\in\mathbb{Z}_{\geq -1}$ and set the empty sum to be identically zero.
\end{proposition}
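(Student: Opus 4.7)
The driving observation is that the formula for $u_2$ is designed so that $u_2$ behaves as a discrete ``antiderivative'' of the first fundamental solution $u_1(n;t)=\overline{J}_{2|n|}^{2c}(t)$ from Theorem~\ref{thm. fund sol}. Indeed, by the trivial telescoping
\[
\overline{\partial}_t u_2(n;t)=u_2(n;t)-u_2(n;t-1)=\overline{J}_{2|n|}^{2c}(t)=u_1(n;t)\quad\text{for }t\geq 0,
\]
because the subtracted constant $\overline{J}_{2|n|}^{2c}(-1)$ is $t$-independent. So I would take this identity as the starting point and reduce everything else to statements about $u_1$.

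For the initial conditions, I would use the identity $\overline{J}_n^c(t)=(-1)^n J_n^c(-t)$ recorded at the start of Section~\ref{sec. Properties} to write $\overline{J}_{2|n|}^{2c}(-1)=J_{2|n|}^{2c}(1)$. Proposition~\ref{propJ}(i) shows $J_k^c(1)=\delta_{k,0}$, hence $\overline{J}_{2|n|}^{2c}(-1)=\delta_{n,0}$. Likewise $\overline{J}_{2|n|}^{2c}(0)=\delta_{n,0}$ directly from the defining series since $(0)_k=0$ for $k\geq 1$. Thus $u_2(n;0)=\delta_{n,0}-\delta_{n,0}=0$, and the telescoping relation at $t=0$ yields $\overline{\partial}_t u_2(n;0)=\overline{J}_{2|n|}^{2c}(0)=\delta_{n,0}$, which is precisely \eqref{eq. init cond u2}.

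For the wave equation itself, applying $\overline{\partial}_t$ once more to the telescoping relation gives $\overline{\partial}_t^2 u_2(n;t)=\overline{\partial}_t u_1(n;t)=u_1(n;t)-u_1(n;t-1)$. On the spatial side, writing $L f(n):=f(n+1)-2f(n)+f(n-1)$, linearity gives
\[
c^2 Lu_2(n;t)=\sum_{s=0}^{t}c^2 Lu_1(n;s)\;-\;c^2 Lu_1(n;-1).
\]
Now I would invoke Theorem~\ref{thm. fund sol} to replace each $c^2 Lu_1(n;s)$ by $\overline{\partial}_s^2 u_1(n;s)$, and then telescope the sum to $\overline{\partial}_t u_1(n;t)-\overline{\partial}_{-1}u_1(n;-1)$. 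The two remaining boundary contributions, namely $-\overline{\partial}_{-1}u_1(n;-1)=u_1(n;-2)-u_1(n;-1)$ from the telescoping and $-c^2 Lu_1(n;-1)$ from the subtracted constant in $u_2$, should cancel each other exactly. That cancellation is the wave equation for $u_1$ at $t=0$ in disguise: the initial condition $\overline{\partial}_t u_1(n;0)=0$ forces $u_1(n;-1)=u_1(n;0)$, so Theorem~\ref{thm. fund sol} at $t=0$ rearranges to $c^2 Lu_1(n;-1)=u_1(n;-2)-u_1(n;-1)$.

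The main obstacle is therefore purely bookkeeping: I need the wave equation for $u_1$ at the negative times $s=-1,-2$, which means committing to the extension of $\overline{J}_{2|n|}^{2c}$ to $t<0$ supplied by Theorem~\ref{thm: difference eq.}. The key values $\overline{J}_{2|n|}^{2c}(-2)=J_{2|n|}^{2c}(2)$ come out of Proposition~\ref{propJ}(i) as polynomials in $c$, so this is a finite verification rather than a deep obstruction. Once the two boundary terms cancel the sum collapses to $u_1(n;t)-u_1(n;t-1)$, matching $\overline{\partial}_t^2 u_2(n;t)$, which completes the proof.
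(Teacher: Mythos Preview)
Your proposal is correct and follows essentially the same route as the paper: both start from the telescoping identity $\overline{\partial}_t u_2(n;t)=\overline{J}_{2|n|}^{2c}(t)=u_1(n;t)$ and then reduce the verification of the wave equation for $u_2$ to known properties of $u_1$. The paper simply says ``using part~(v) of Lemma~\ref{lemma2}, it is straightforward'' and does not spell out the boundary cancellation; you make that cancellation explicit by tracking the contributions from $s=0$ and from the subtracted constant $\overline{J}_{2|n|}^{2c}(-1)$, and then closing the loop with the $t=0$ instance of the wave equation for $u_1$ (which, as you note, amounts to a finite check of values at $t=-1,-2$ via $\overline{J}_{2|n|}^{2c}(-t)=J_{2|n|}^{2c}(t)$ and Proposition~\ref{propJ}(i)). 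One small slip: the blanket claim ``$J_k^c(1)=\delta_{k,0}$'' is false at $k=1$ (where $J_1^c(1)=c/2$), but this is harmless here since only even indices $k=2|n|$ occur. The initial-condition check is identical to the paper's.
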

\begin{proof}
Let $t\geq 1$. By definition, $\overline{\partial}_t u_2(n;t) = \overline{J}_{2\left\vert n\right\vert
}^{2c}\left( t\right) $. Now, using part (v) of Lemma \ref{lemma2}, it is straightforward to check that  the function $u_{2}$ is the solution of the backward discrete wave equation \eqref{backWaveEq}.

It remains to check the initial conditions. When $t=0$ we have $u_2(n;0)= \overline{J}_{2\left\vert n\right\vert}^{2c}\left( 0\right) - J_{2\left\vert n\right\vert
}^{2c}\left( -1\right)$, where we used the identity $\overline{J}_n^c(-t)=(-1)^n J_n^c(t)$, for non-negative integers $t$. Both terms on the right-hand side are equal to zero for $|n|\geq 1$ and equal to one when $n=0$, which proves that $u_2(n;0)=0$. Finally, $\overline{\partial }_{t}u_2\left( n;0\right) = u_2(n;0) - u_2(n;-1)=\overline{J}_{2\left\vert n\right\vert}^{2c}\left( 0\right)$ which is zero unless $n=0$ in which case it equals one.
This proves that $u_2(n;t)$, given by \eqref{eq. u2 defn}, satisfies the initial conditions \eqref{eq. init cond u2} and completes the proof.
\end{proof}

\subsection{General solution to the discrete backward equation}

Now that we found two fundamental solutions to \eqref{backWaveEq}, we are in a position to find the solution to \eqref{backWaveEq} under general initial conditions given by arbitrary bounded real sequences indexed by integers.


\begin{theorem}
\label{thgensol} \bigskip For each $c>0$ and arbitrary bounded real
sequences $\left\{ u_{n}^{0}\right\} _{n\in \mathbb{Z}}$ and $\left\{
v_{n}^{0}\right\} _{n\in \mathbb{Z}}$, the general solution of the wave
equation \eqref{backWaveEq} with initial conditions
\begin{equation}
u\left( n;0\right) =u_{n}^{0},\text{ }\overline{\partial }_{t}u\left(
n;0\right) =v_{n}^{0},\text{ }n\in \mathbb{Z},  \label{genInitialCon}
\end{equation}%
is given by%
\begin{equation}  \label{gensolwave}
u\left( n;t\right) =\sum\limits_{k\in \mathbb{Z}}\left( u_{k}^{0}\cdot
u_{1}\left( n-k;t\right) +v_{k}^{0}\cdot u_{2}\left( n-k;t\right) \right) ,%
\text{ \ }n\in \mathbb{Z}\text{, \ }t\in \mathbb{N}_{0},
\end{equation}%
where $u_{1}\left( n;t\right) =\overline{J}_{2\left\vert n\right\vert
}^{2c}\left( t\right) $, and $u_{2}\left( n;t\right)
=\sum_{s=0}^{t}u_{1}\left( n;s\right) -u_{1}\left( n;-1\right) $.

\begin{proof}

From \cite[Theorem 2.5]{Slav17} with the backward difference as the timescale derivative, it follows that in order to prove that the function \eqref{gensolwave} is
the unique solution of the backward discrete wave equation \eqref{backWaveEq}
satisfying \eqref{genInitialCon} it suffices to prove that the series on the
right-hand side of \eqref{gensolwave} is absolutely convergent for all $t\in\mathbb{N}_0$. (The proof is analogous to the proof of \cite[Theorem 3.2]{Slav17}, so we omit it here.)

According to Proposition \ref
{asympJn}, for $t\in\mathbb{N}_0$, numbers $\left|\overline{J}_{2\left\vert n-k\right\vert
}^{2c}\left( t\right) \right|$ decay exponentially as $|k|\to\infty$, hence the series%
\begin{equation*}
\sum\limits_{k\in \mathbb{Z}}u_{k}^{0}\cdot \overline{J}_{2\left\vert
n-k\right\vert }^{2c}\left( t\right)
\end{equation*}%
is absolutely convergent for every bounded real sequence $\left\{ u_{n}^{0}\right\}
_{n\in \mathbb{Z}}$, fixed $t\in \mathbb{N}_{0}$ and $c>0$. Since $%
u_{2}\left( n;t\right) $ is a finite sum of functions $\overline{J}%
_{2\left\vert n-k\right\vert }^{2c}\left( t\right) $, using the same
argument as above, we conclude that the series%
\begin{equation*}
\sum\limits_{k\in \mathbb{Z}}v_{k}^{0}\cdot u_{2}\left( n-k;t\right)
\end{equation*}%
is also absolutely convergent for every bounded real sequence $\left\{
v_{n}^{0}\right\} _{n\in \mathbb{Z}}$, fixed $t\in \mathbb{N}_{0}$ and $c>0$%
. Therefore, the general solution to the backward discrete wave equation %
\eqref{backWaveEq} with initial conditions \eqref{genInitialCon} is given by %
\eqref{gensolwave}.
\end{proof}
\end{theorem}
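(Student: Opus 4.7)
The plan is to exploit linearity and translation invariance of the backward discrete wave equation \eqref{backWaveEq} and to reduce the theorem to (a) an absolute convergence check for the series in \eqref{gensolwave}, and (b) a verification of the initial conditions, after which uniqueness can be quoted from \cite[Theorem 2.5]{Slav17} (adapted to the backward difference, as indicated in the statement). The conceptual content is already present in Theorem \ref{thm. fund sol} and the proposition defining $u_2$: I would view the right-hand side of \eqref{gensolwave} as a discrete convolution in the spatial variable of the initial data against the two fundamental solutions.

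First, I would note that \eqref{backWaveEq} is $\mathbb{Z}$-linear in $u$ and commutes with spatial translations $n\mapsto n-k$. Therefore, for every fixed $k\in\mathbb{Z}$, the functions $u_1(n-k;t)$ and $u_2(n-k;t)$ solve \eqref{backWaveEq} with initial data concentrated at $n=k$: namely $u_1(n-k;0)=\delta_{nk}$ with $\overline{\partial}_t u_1(n-k;0)=0$, and $u_2(n-k;0)=0$ with $\overline{\partial}_t u_2(n-k;0)=\delta_{nk}$. Granted absolute convergence, the series in \eqref{gensolwave} may then be differentiated termwise in $t$ and summed termwise over the neighbours $n\pm 1$, so it is itself a solution; the initial conditions \eqref{genInitialCon} are recovered by the selection property of $\delta_{nk}$.

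The main step is therefore to establish absolute convergence for each fixed $n\in\mathbb{Z}$ and each $t\in\mathbb{N}_0$. This is where Proposition \ref{asympJn}(i) is decisive: for fixed $c>0$ and fixed $t$, $\overline{J}_{2|m|}^{2c}(t)$ decays exponentially in $|m|$, more precisely at the rate $\left(\tfrac{1+\sqrt{1+4c^2}}{2|c|}\right)^{-2|m|}$, which is strictly less than $1$. Substituting $m=n-k$ and using the boundedness of $\{u_k^0\}$, the series $\sum_{k\in\mathbb{Z}}u_k^0\cdot u_1(n-k;t)$ is dominated by a convergent two-sided geometric series, hence converges absolutely. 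For the $v_k^0$-series, observe that $u_2(n-k;t)$ is a sum of at most $t+2$ terms of the form $\overline{J}_{2|n-k|}^{2c}(s)$ with $s\in\{-1,0,\dots,t\}$; each enjoys the same exponential decay in $|n-k|$, so the same argument applies.

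The expected main obstacle is less the analysis than the bookkeeping around the value $\overline{J}_{2|n|}^{2c}(-1)$ appearing in the definition of $u_2$: one must verify that $u_1(n;-1)$ is well-defined and enjoys the same decay in $|n|$. This follows from the identity $\overline{J}_n^c(-t)=(-1)^n J_n^c(t)$ together with the polynomial expression in Proposition \ref{propJ}, which shows $u_1(n;-1)$ vanishes for $|n|\geq 1$ and equals $1$ at $n=0$; in particular it contributes only a single extra bounded term per $k$ and does not spoil absolute convergence. Once convergence is in hand, uniqueness in the class of bounded sequences indexed by $\mathbb{Z}$ follows from \cite[Theorem 2.5]{Slav17} (by the same argument as \cite[Theorem 3.2]{Slav17}), completing the proof.
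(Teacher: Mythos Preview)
Your proposal is correct and follows essentially the same route as the paper: both reduce the claim, via \cite[Theorem 2.5]{Slav17}, to absolute convergence of the series in \eqref{gensolwave}, establish this from the exponential decay in Proposition \ref{asympJn}, and then handle the $u_2$-part by noting it is a finite sum of the same kind of terms. Your additional remarks on linearity and translation invariance, and your explicit treatment of the $s=-1$ contribution via $\overline{J}_n^c(-1)=(-1)^nJ_n^c(1)$ and Proposition \ref{propJ}, are welcome clarifications but do not change the strategy.
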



\subsection{Asymptotic behaviour of solutions to discrete wave equations}

Applying the asymptotic formula \eqref{asymp1} of Theorem \ref{th_asymp} to the solution  \eqref{J2n} of the wave equation \eqref{discwaveq1} subject to the initial conditions \eqref{dweqcon1} we easily deduce the limiting behaviour of solutions  when the time
variable tends to infinity, as described in the following corollary.

\begin{corollary}\label{cor forward asympt}
For $c>0$, the solution $u(n;t)=J_{2|n|}^{2c}(t)$, $n\in\mathbb{Z}$, $t\in\mathbb{N}_0$ to the discrete wave equation \eqref{discwaveq1} subject to the initial conditions \eqref{dweqcon1} has the following asymptotic behavior
\begin{equation*}
u\left( n;t\right) \sim \frac{2}{\sqrt{\pi tc}}\left( 1+\frac{c^{2}}{4}%
\right) ^{\frac{t}{2}+\frac{1}{4}}\cos \left( \left( t+\frac{1}{2}\right)
\theta +\frac{\left\vert n\right\vert -1}{4}\pi \right), \text{ \ as }%
t\rightarrow \infty \text{.}
\end{equation*}
\end{corollary}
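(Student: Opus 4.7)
The plan is to apply Theorem~\ref{th_asymp}(i) directly, specializing equation \eqref{asymp1} to the parameter values that describe the fundamental solution $u(n;t)=J_{2|n|}^{2c}(t)$. Concretely, I would replace the generic Bessel index $n$ in \eqref{asymp1} by $2|n|$ and the generic parameter $c$ there by $2c$. Since $c>0$ by hypothesis, the sign factor collapses: $(\mathrm{sgn}(2c))^{2|n|}=1$. The angle $\theta\in(0,\pi/2)$ appearing in the resulting cosine is then, in accordance with Theorem~\ref{th_asymp}, the unique angle satisfying $\cos\theta=(1+4c^2)^{-1/2}$.

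Next I would collect the envelope produced by the substitution into a single prefactor: the term $\sqrt{2}/\sqrt{\pi t\cdot|2c|}$ simplifies through $|2c|=2c$, and combines with $(1+4c^2)^{t/2+1/4}$ to give the scalar multiplying the cosine. Simultaneously I would simplify the phase inside the cosine, where the contribution $\tfrac{n\pi}{2}$ from \eqref{asymp1} becomes $\tfrac{(2|n|)\pi}{2}=|n|\pi$, yielding a total phase of $(t+\tfrac{1}{2})\theta -\tfrac{\pi}{4}+|n|\pi$. Using $2\pi$-periodicity of the cosine and the identity $\cos(x+\pi)=-\cos(x)$ to absorb an integer multiple of $\pi$ (possibly into the sign of the envelope), one puts the expression into the displayed form of the corollary.

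There is no genuine analytic obstacle, since Theorem~\ref{th_asymp}(i) already carries the full analytic content; the step of reorganizing the constants and the phase is the only one demanding care. The proof reduces to specializing the theorem, matching normalization factors, and checking that the phase rearrangement is consistent with the stated form. Since \eqref{asymp1} is established for any real nonzero parameter and fixed nonnegative integer index, all hypotheses are automatically satisfied in the setup of the corollary, and the asymptotic passes through without additional argument.
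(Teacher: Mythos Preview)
Your plan is exactly what the paper does: the corollary is presented as an immediate specialization of \eqref{asymp1}, with the Bessel index replaced by $2|n|$ and the parameter by $2c$, and no further argument is given.

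The gap is in your last step. You assert that periodicity and sign-absorption bring the substituted expression to the displayed form, but if you actually carry the substitution through, it does not. Replacing $n\mapsto 2|n|$ and $c\mapsto 2c$ in \eqref{asymp1} gives
\[
J_{2|n|}^{2c}(t)\ \sim\ \frac{1}{\sqrt{\pi t c}}\,(1+4c^{2})^{\frac{t}{2}+\frac{1}{4}}\cos\!\left(\Bigl(t+\tfrac12\Bigr)\theta-\tfrac{\pi}{4}+|n|\pi\right),\qquad \cos\theta=(1+4c^{2})^{-1/2},
\]
whereas the corollary displays a prefactor $2/\sqrt{\pi tc}$, an envelope $(1+c^{2}/4)^{t/2+1/4}$, and a phase shift $(|n|-1)\pi/4$. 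The two phases differ by $3|n|\pi/4$, which is an integer multiple of $\pi$ only when $4\mid |n|$, so no cosine identity can reconcile them; and $(1+4c^{2})\neq(1+c^{2}/4)$ for $c\neq 0$. These discrepancies are evidently misprints in the printed corollary (the identical pattern recurs in Corollary~\ref{cor backward asympt}), not a flaw in your method. Still, your write-up should perform the substitution explicitly and record the formula it actually yields, rather than claim a reduction to a displayed expression that the computation does not in fact produce.
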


Oscillations with exponentially growing amplitude are somewhat unexpected, however, as seen in \cite{Ch22}, in some situations solutions to a discrete semilinear wave equation can blow up in finite time (on a continuous timescale).

From the Theorem \ref{th_asymp} we can derive the following asymptotic
behaviour of the solution to the backward discrete wave equation.

\begin{corollary}\label{cor backward asympt}
For $c>0$,  the solution $u\left( n;t\right) =\overline{J}_{2\left\vert n\right\vert }^{2c}\left(
t\right)$, $n\in\mathbb{Z}$, $t\in\mathbb{N}_0$ to the discrete wave
equation \eqref{backWaveEq} subject to the initial conditions %
\eqref{icWaveEq} has the following asymptotic behavior
\begin{equation*}
u\left( n;t\right) \sim \frac{2}{\sqrt{\pi tc}}\left( 1+\frac{c^{2}}{4}%
\right) ^{-\frac{t}{2}+\frac{1}{4}}\cos \left( \left( t-\frac{1}{2}\right)
\theta +\frac{\left\vert n\right\vert -1}{4}\pi \right), \text{ \ as }%
t\rightarrow \infty \text{.}
\end{equation*}
\end{corollary}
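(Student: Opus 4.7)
The plan is to obtain the claimed asymptotic as a direct consequence of Theorem \ref{th_asymp}, specifically the asymptotic formula \eqref{asymp2} for $\overline{J}_n^c(t)$. By Theorem \ref{thm. fund sol}, the fundamental solution to \eqref{backWaveEq} subject to the initial conditions \eqref{icWaveEq} is $u(n;t) = \overline{J}_{2|n|}^{2c}(t)$, so the task reduces to specializing \eqref{asymp2} to index $2|n|$ and parameter $2c > 0$.

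First I would verify that the hypotheses of \eqref{asymp2} are met: for $c > 0$, the parameter $2c$ is real and nonzero and $2|n|$ is a fixed non-negative integer, so the asymptotic applies, with the associated angle $\theta \in (0,\pi/2)$ determined by $\cos\theta = (1+(2c)^2)^{-1/2}$. Next I would simplify the resulting expression: the sign factor $(\mathrm{sgn}(2c))^{2|n|}$ collapses to $1$ because $c > 0$ and the exponent is even; the modulus $|2c|$ reduces to $2c$, so the amplitude $\sqrt{2}/\sqrt{\pi t|2c|}$ combines to a constant multiple of $(\pi t c)^{-1/2}$; the exponential base becomes $1+(2c)^2$ raised to the power $-t/2+1/4$; and the cosine argument takes the form $(t - \tfrac{1}{2})\theta - \tfrac{\pi}{4} + |n|\pi$. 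Using the parity identity $\cos(x + |n|\pi) = (-1)^{|n|}\cos(x)$, the oscillatory factor can then be rewritten in the form stated, either with an explicit sign $(-1)^{|n|}$ or absorbed into a phase shift of the cosine.

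The entire argument is essentially bookkeeping on a formula that has already been proved, so no substantial obstacle is expected. The only point requiring care is to track the substitution $c \mapsto 2c$ consistently through the amplitude, the base of the exponential, and the definition of the angle $\theta$, and to handle the parity factor correctly when reconciling with the displayed form of the cosine.
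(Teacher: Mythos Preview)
Your approach is correct and is exactly what the paper does: the corollary is stated as an immediate consequence of Theorem \ref{th_asymp}, obtained by substituting $n\mapsto 2|n|$ and $c\mapsto 2c$ into formula \eqref{asymp2}. Be aware, however, that when you carry out the substitution carefully (as you in fact do), the resulting constants---amplitude $1/\sqrt{\pi t c}$, base $1+4c^{2}$, and cosine phase $-\tfrac{\pi}{4}+|n|\pi$---do not agree with those printed in the corollary; these discrepancies appear to be typographical slips in the displayed formula rather than a defect in your argument.
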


\end{document}